\def\subsection{\@startsection{subsection}{1}%
  \z@{.5\linespacing\@plus.7\linespacing}{.3\linespacing}%
  {\normalfont\scshape}}
\newcommand{\eqnum}{\refstepcounter{equation}\textup{\tagform@{\theequation}}}
\theoremstyle{plain}
\newtheorem{theorem}{Theorem}
\newtheorem{lemma}[theorem]{Lemma}
\newtheorem{proposition}[theorem]{Proposition}
\theoremstyle{definition}
\newtheorem{definition}[theorem]{Definition}
\newtheorem{remark}[theorem]{Remark}
\definecolor{mypurple}{rgb}{.3,0,.5}
\newcommand{\R}{\mathbb{R}}
\newcommand{\N}{\mathbb{N}}
\newcommand{\Z}{\mathbb{Z}}
\renewcommand{\P}{\mathbb{P}}
\newcommand{\E}{\mathbb{E}}
\newcommand{\M}{\mathcal{M}}
\newcommand{\U}{\mathcal{U}}
\newcommand{\A}{\mathcal{A}}
\newcommand{\WW}{\mathcal{W}}
\newcommand{\T}{\mathcal{T}}
\newcommand{\XX}{\mathcal{X}}
\newcommand{\YY}{\mathcal{Y}}
\newcommand{\BB}{\mathcal{B}}
\newcommand{\QQ}{\mathcal{Q}}
\newcommand{\PP}{\mathcal{P}}
\renewcommand{\S}{\mathcal{S}}
\newcommand{\x}{\boldsymbol{x}}
\newcommand{\1}{\mathbbm{1}}
\DeclareMathOperator{\Var}{Var}
\let\oldmarginpar\marginpar
\renewcommand\marginpar[1]{\-\oldmarginpar[\raggedleft\footnotesize #1]%
{\raggedright\footnotesize #1}}
\begin{document}

\title[]%
{Beardwood-Halton-Hammersley Theorem \\ for Stationary Ergodic Sequences: \\ a Counterexample}
\author[]
{Alessandro Arlotto and J. Michael Steele}

\thanks{A. Arlotto: The Fuqua School of Business, Duke University, 100 Fuqua Drive, Durham, NC, 27708.
Email address: \texttt{alessandro.arlotto@duke.edu}}

\thanks{J. M.
Steele:  Department of Statistics, The Wharton School, University of Pennsylvania, 3730 Walnut Street, Philadelphia, PA, 19104.
Email address: \texttt{steele@wharton.upenn.edu}}

\citationmode{full}

\begin{abstract}
        We construct a stationary ergodic process $X_1, X_2, \ldots $
        such that each $X_t$ has
        the uniform distribution on the unit square and the length $L_n$
        of the shortest path through the points $X_1, X_2, \ldots ,X_n$ is \emph{not} asymptotic to
        a constant times the square root of $n$.
        In other words, we show that the \citename{BeaHalHam:PCPS1959} theorem
        does not extend from the case of independent uniformly distributed random variables
        to the case of stationary ergodic sequences with uniform marginal distributions.

        \bigskip

        {\sc Mathematics Subject
        Classification (2000)}: Primary 60D05, 90B15; Secondary 60F15, 60G10, 60G55, 90C27.

        \bigskip

        {\sc Key Words:} traveling salesman problem, Beardwood-Halton-Hammersley theorem, subadditive Euclidean functional,
        stationary ergodic processes, equidistribution, construction of stationary processes.

\end{abstract}
\citationmode{abbr}

\date{
first version: June 30, 2013; this version: August 24, 2015. 
}

\maketitle


\section{Introduction}\label{se:Introduction}

Given a sequence $x_1, x_2, \ldots, x_n, \ldots$ of points in the unit square $[0,1]^2$,
we let $L(x_1,x_2, \ldots, x_n)$ denote the length of the shortest path through the $n$ points $x_1,x_2, \ldots, x_n$; that is,
we let \citationmode{full}
\begin{equation}\label{TSPcost}
L(x_1, x_2, \ldots, x_n)= \min_\sigma   \sum_{t=1}^{n-1} |x_{\sigma(t)} -x_{\sigma(t+1)}| ,
\end{equation}
where the minimum is over all permutations $\sigma: \{1,2,\ldots, n\} \rightarrow \{1,2,\ldots,n\}$ and where $|x-y|$ denotes the
usual Euclidean distance between elements $x$ and $y$ of $[0,1]^2$.
The classical theorem
of \citeasnoun{BeaHalHam:PCPS1959} tells us in the leading case that if $X_1, X_2, \ldots $
is a sequence of independent random variables with the uniform distribution on $[0,1]^2$, then
there is a constant $\beta>0$ such that \citationmode{abbr}
\begin{equation}\label{eq:BHHlimit}
\lim_{n \rightarrow \infty} n^{-1/2} L(X_1, X_2, \ldots, X_n)  = \beta \quad \text{with probability one}.
\end{equation}
The Beardwood, Halton and Hammersley (BHH) constant $\beta$ has been studied extensively, and, although its exact value is still unknown,
sophisticated numerical computations
\cite{AppBixChvaCook:PRINCETON2006} suggest that $\beta \approx 0.714\ldots$. The best available analytical bounds are much rougher;
we only know with certainty that $0.62499 \leq \beta \leq 0.91996$.
\citeaffixed[pp. 497--498.]{Fin:CUP2003}{See, e.g.,}

The BHH theorem is a strong law for independent identically distributed random variables, and,
for reasons detailed below, it is natural
to ask if there is an \emph{analogous ergodic theorem} where one relaxes
the hypotheses to those of the classic ergodic theorem for partial sums. Our main goal is to answer this question;
specifically, we construct a stationary ergodic process with uniform marginals
on $[0,1]^2$ for which the length of the shortest path through $n$ points is
\emph{not} asymptotic to a constant times $n^{1/2}$.

\begin{theorem}[No Ergodic BHH]\label{th:noBHHThm}
There are constants $c_1 < c_2$ and a stationary and ergodic process
$X_1, X_2, \ldots$ such that $X_1$ is uniformly distributed on $[0,1]^2$ and
such that with probability one
\begin{equation}\label{noBBBinequalies}
\liminf_{n \rightarrow \infty} \frac{ L(X_1, X_2, \ldots, X_n) }{ n^{1/2} }
\leq c_1 < c_2 \leq
\limsup_{n \rightarrow \infty} \frac{ L(X_1, X_2, \ldots, X_n) }{ n^{1/2} }.
\end{equation}
\end{theorem}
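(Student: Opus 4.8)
The plan is to realize $X_1,X_2,\dots$ as a factor of an ergodic measure-preserving system. Concretely, one builds a probability space $(\Omega,\F,\mu)$, an ergodic $\mu$-preserving map $T\colon\Omega\to\Omega$, and a measurable map $\phi\colon\Omega\to[0,1]^2$ with $\phi_*\mu$ equal to Lebesgue measure, and then sets $X_t(\omega)=\phi(T^t\omega)$; this is automatically stationary, ergodic, and uniform in each marginal, so only the oscillation \eqref{noBBBinequalies} remains to be verified. The design principle is to make the process alternate, along a hierarchy of extremely rapidly growing time scales $N_1\ll N_2\ll\cdots$, between a \emph{spread} regime, in which it emits fresh independent uniform points, and a \emph{clustered} regime, in which it emits points that reuse, at an ever finer spatial resolution, only a vanishing fraction of distinct locations.

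Two deterministic facts about $L$ do the geometric work. First, dividing the square into a $\delta$-mesh and applying the universal bound $L(x_1,\dots,x_n)\le C\sqrt n$ cell by cell gives, for every $\delta>0$,
\begin{equation*}
L(x_1,\dots,x_n)\ \le\ C\sqrt{m}+C\,\delta\sqrt{mn},
\end{equation*}
where $m=m(\delta)$ is the number of mesh cells that contain at least one of the points; hence if the points occupy only $m=o(n)$ cells at some scale $\delta$ with $\delta\sqrt m\to 0$, then $L(x_1,\dots,x_n)=o(\sqrt n)$. Second, $L$ is monotone under inclusion of point sets (shortcutting a path never lengthens it), so if among $x_1,\dots,x_n$ there sits a block of $N$ independent uniform points, then $L(x_1,\dots,x_n)\ge cN^{1/2}$ eventually with probability one --- by the lower-bound half of the Beardwood--Halton--Hammersley theorem, or more elementarily because a positive proportion of $N$ such points are pairwise $\gtrsim N^{-1/2}$-separated. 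Combining the two: at the end of a spread stage (time $n=N_{2k+1}$, with $\asymp N_{2k+1}$ fresh independent uniform points placed during it) one has $L(X_1,\dots,X_n)\ge c_2\sqrt n$; at the end of a clustered stage (time $n=N_{2k}$, during which every emitted point landed in one of only $m_k=o(N_{2k})$ cells of a $\delta_k$-mesh with $\delta_k\sqrt{m_k}\to 0$) one has $L(X_1,\dots,X_n)\le C\sqrt{N_{2k-1}}+o(\sqrt{N_{2k}})$, which is $o(\sqrt n)$ as soon as $N_{2k-1}=o(N_{2k})$. So the construction must force each clustered stage eventually to dwarf the entire accumulated history preceding it, which is why the scales must grow so fast.

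To make both regimes recur while keeping stationarity and ergodicity, one realizes the scale hierarchy by a cutting-and-stacking construction: at stage $k$ the current column is cut and restacked $N_k$ times, with spacers inserted to encode the clustered-versus-spread alternation, so that the stage-$k$ column has height $\asymp\prod_{j\le k}N_j$ and a $T$-orbit runs through consecutive stretches of exactly these geometrically nested lengths, alternately in the two regimes. On the spatial side, each emitted point is taken to be uniform on a \emph{uniformly chosen} dyadic cell of the relevant generation, the choices coordinated across a single stack so that within one column the clustered points of stage $k$ visit only $m_k$ distinct cells, while averaged over the whole stack every cell of that generation is visited equally often; this makes $\phi_*\mu$ exactly Lebesgue, and ergodicity follows by the standard argument that the stage columns generate the $\sigma$-algebra.

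The step I expect to be the main obstacle is producing a single ergodic system that simultaneously (i) exhibits a genuine almost-sure oscillation --- not merely an oscillation of expectations --- with the clustered regime infinitely often supplying the bulk of the point set seen so far, and (ii) has one-dimensional marginals that are \emph{exactly}, not approximately, uniform. Any construction with enough mixing would, via the self-similar subdivision argument underlying \eqref{eq:BHHlimit}, still force $n^{-1/2}L(X_1,\dots,X_n)$ to converge; so the counterexample has to exploit long-range dependence in an essential way, and the delicate part is choreographing the fast-growing scales, the dyadic-cell assignments, and the spacer pattern so that stationarity, ergodicity, exact uniformity of the marginal, and the two-sided oscillation all hold at once. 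Relative to that, the two $L$-estimates above and the verification that $(X_t)$ is stationary and ergodic are routine.
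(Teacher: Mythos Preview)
Your approach is genuinely different from the paper's and more ambitious: you propose to build the ergodic process directly via cutting-and-stacking with alternating spread/clustered regimes. The paper does not attempt this. Instead it first proves Theorem~\ref{th:noBHHThmExpectations}, constructing a merely \emph{stationary} process $\XX^*$ with uniform marginals whose normalized expected path length $n^{-1/2}\E[L(\XX^*[1{:}n])]$ oscillates between $2^{-1/2}\beta$ and $\beta$; the mechanism is an iterated ``twin-cities'' transformation $T_{\epsilon,N}$ (each block of $N$ points is followed by its $\epsilon$-translate, then the time index is randomized to restore stationarity), so that at the appropriate scale an optimal path effectively visits only $n/2$ distinct locations. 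Theorem~\ref{th:noBHHThm} then follows by a short Choquet argument in Section~\ref{se:ErgodicDecompTrick}: the law of $\XX^*$ lies in the compact convex set $\U$ of shift-invariant measures on $\T^\infty$ with uniform marginals, whose extreme points are exactly the ergodic members of $\U$; since $n^{-1/2}L$ is bounded and cannot converge a.s.\ under $\XX^*$ (else its expectation would converge, contradicting Theorem~\ref{th:noBHHThmExpectations}), some ergodic component gives positive --- hence, by invariance of the event, full --- mass to a set $\{\liminf\le c_1<c_2\le\limsup\}$. What this buys is precisely the avoidance of the obstacle you correctly flag as the hard part: one never has to exhibit a single ergodic system with exact uniform marginal and almost-sure oscillation simultaneously. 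The paper even remarks in Section~\ref{se:conclusions} that proving its own $\XX^*$ ergodic, while plausible, does not seem easy. Your route, if carried through, would give the stronger conclusion $\liminf n^{-1/2}L=0$; but the details you defer --- coordinating the dyadic-cell assignments so that the marginal is \emph{exactly} Lebesgue while for a.e.\ $\omega$, along a random subsequence depending on its phase in the tower, the clustered stage still dwarfs the entire preceding history --- are substantial, and the Choquet detour is what makes them unnecessary.
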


This theorem is obtained as a corollary of the next theorem where the condition of ergodicity is dropped.
In this case, one can construct processes for which there is a more
explicit control of the expected
minimal path length.

\begin{theorem}[Asymptotics of Expected Path Lengths]\label{th:noBHHThmExpectations}
There is a  stationary
process $X^*_1, X^*_2, \ldots$ such that
$X^*_1$ is uniformly distributed on $[0,1]^2$
and such that
\begin{equation}\label{noBBBinequaliesMean}
\liminf_{n \rightarrow \infty} \frac{ \E[L(X^*_1, X^*_2, \ldots, X^*_n)] }{n^{1/2}}
\leq   2^{-1/2} \beta  < \beta \leq
\limsup_{n \rightarrow \infty} \frac{ \E[L(X^*_1, X^*_2, \ldots, X^*_n)] }{ n^{1/2} };
\end{equation}
where $\beta$ is the BHH constant.
\end{theorem}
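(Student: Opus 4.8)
\textbf{Proof proposal for Theorem~\ref{th:noBHHThmExpectations}.}

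The plan is to exhibit $X^*_1,X^*_2,\dots$ as an explicit coding of a measure‑preserving system, engineered so that the empirical geometry of the first $n$ points alternates, along two interlaced sparse sequences of scales, between a ``generic'' picture (giving shortest‑path length $\approx\beta\sqrt n$) and a ``degenerate'' picture (giving length $\lesssim 2^{-1/2}\beta\sqrt n$). Concretely, I would start from a base system $(\Omega,\mathcal F,\P,T)$ of odometer type --- e.g.\ $\Omega=\{0,1,\dots,m-1\}^{\mathbb N}$ with the product of uniform laws and $T$ the add‑one‑with‑carrying map, which is measure preserving and ergodic --- together with a measure‑preserving coding $f\colon\Omega\to[0,1]^2$, possibly augmented by an independent i.i.d.\ sequence $\xi_1,\xi_2,\dots$ of uniform variables, and set $X^*_t=g(T^t\omega,\xi_t)$. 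Stationarity is then automatic, and since $g(\,\cdot\,,\xi_1)$ will be arranged to push $\P$ forward to Lebesgue measure on $[0,1]^2$ we get $X^*_1\sim\operatorname{Unif}([0,1]^2)$ for free. The virtue of the odometer is that for $n$ equal to (a near‑multiple of) a ``digit period'' $M_k$, the orbit segment $T\omega,\dots,T^n\omega$ runs its low‑order digits through all their values exactly once while freezing the high‑order digits, so $X^*_1,\dots,X^*_n$ is, up to a boundary term of relative size $o(1)$, a deterministic configuration governed by the frozen digits and the $\xi_t$'s; the coding $g$ and the radices are what we get to design.

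I would then choose $g$ so that this configuration is ``generic'' along a sequence of full scales $m_k\uparrow\infty$ --- i.e.\ $X^*_1,\dots,X^*_{m_k}$ is essentially an i.i.d.\ uniform sample on $[0,1]^2$ --- and ``degenerate'' along a sequence of collapsed scales $n_k\uparrow\infty$, where I would make $X^*_1,\dots,X^*_{n_k}$, up to $o(n_k)$ exceptional points, a two‑to‑one version of an i.i.d.\ uniform sample: about $n_k/2$ essentially‑i.i.d.\ uniform locations, each used twice. Write $L_n:=L(X^*_1,\dots,X^*_n)$. At the full scales the theorem \eqref{eq:BHHlimit}, upgraded from a.s.\ to $L^1$ convergence by the deterministic bound $L_n\le C\sqrt n$, yields $\E[L_{m_k}]/\sqrt{m_k}\to\beta$, so $\limsup_n \E[L_n]/\sqrt n\ge\beta$. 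At the collapsed scales, coincident points cost nothing, so $L_{n_k}$ exceeds by at most $\operatorname{diam}([0,1]^2)$ the sum of $L$ over the $\approx n_k/2$ distinct locations and $L$ over the $o(n_k)$ exceptions; applying \eqref{eq:BHHlimit} to the $\approx n_k/2$ i.i.d.\ uniform locations (which contributes $(\beta+o(1))\sqrt{n_k/2}=(2^{-1/2}\beta+o(1))\sqrt{n_k}$) and the crude bound $C\sqrt r$ for the $r=o(n_k)$ exceptions, one gets $\E[L_{n_k}]\le(2^{-1/2}\beta+o(1))\sqrt{n_k}$, so $\liminf_n\E[L_n]/\sqrt n\le 2^{-1/2}\beta$. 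Combining the two displays gives \eqref{noBBBinequaliesMean}. (A less sharp but randomness‑free variant replaces the two‑to‑one picture by an elongated lattice of aspect ratio $m$, whose length is squeezed between $(1-o(1))\sqrt{n/m}$ and a snake bound of the same order --- using $L\ge(\#\{\text{distinct points}\}-1)\times(\text{minimum distance between distinct points})$ for the lower bound --- which still beats $2^{-1/2}\beta$ once $m$ is large.)

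The crux, and where essentially all the work is, is building a \emph{stationary} process with this scale‑dependent collapse; two hard constraints interact. First, Birkhoff's theorem forces every empirical average $n^{-1}\sum_{t\le n}h(X^*_t)$, hence the empirical measure $n^{-1}\sum_{t\le n}\delta_{X^*_t}$, to converge, so it has the \emph{same} limit (namely Lebesgue, by the uniform marginal) along the collapsed scales as along the full scales. Thus the collapse cannot be a shift of density --- ``most of $X^*_1,\dots,X^*_n$ falls in a sub‑rectangle of area $1/2$'' is flatly impossible --- it must be a geometric degeneration (exact repetition of points, or an elongated lattice) invisible to ergodic averages. Second, one cannot produce ``dominant'' long stretches by concatenating i.i.d.\ blocks: a stationary interval partition of $\mathbb Z$ has a stationary gap sequence $(G_i)$, which (for such a partition to exist) has finite mean, and then Borel--Cantelli gives $G_m/m\to0$, so no single block ever dominates its prefix. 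This forces the construction to be genuinely hierarchical: the odometer supplies blocks at all scales simultaneously, nested, so that $\{1,\dots,n\}$ with $n\approx M_k$ really is (a small perturbation of) one scale‑$k$ block rather than a union of many smaller ones. The delicate point is then to choose the coding $g$ and the placement of the auxiliary randomness so that the generic/degenerate alternation is realized across this nested block structure while the one‑point marginal stays exactly uniform; once that is done, the estimates above are routine applications of \eqref{eq:BHHlimit}, the scaling of the TSP functional, subadditivity, and the elementary distance lower bound.
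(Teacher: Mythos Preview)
Your strategic skeleton matches the paper's exactly: build a stationary process with uniform marginals whose geometry alternates, along a sparse sequence of scales, between a ``generic'' picture (TSP $\sim\beta\sqrt n$) and a ``paired'' picture (TSP $\lesssim 2^{-1/2}\beta\sqrt n$), using a hierarchical block structure to reconcile this with stationarity. Your odometer framework is apt; the paper's iterated $T_{\epsilon,N}$ transformations (block, double, randomly shift the index) are precisely an odometer construction in disguise, and your diagnosis of the two obstructions (Birkhoff forces the empirical measure to Lebesgue; stationary block partitions cannot have a single dominant block) is exactly the right one.

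There is, however, a genuine gap in the collapse mechanism. Your ``two-to-one'' picture uses \emph{exact} coincidence (``coincident points cost nothing''), but exact duplication is permanent: once every window of length $n_1$ carries a two-to-one structure, stationarity propagates this to every larger window, so at any scale $m>n_1$ you never see more than $m/2+O(n_1)$ distinct locations, and the TSP stays at $\lesssim 2^{-1/2}\beta\sqrt m$ forever---the limsup side of \eqref{noBBBinequaliesMean} is lost. The paper's fix is to pair each point not with an exact copy but with an $\epsilon_j$-translate in the torus (the $H_{\epsilon,N}$ map): at the collapse scale $2jN_j$ one has $\epsilon_j\ll N_j^{-1/2}$, so looping to each twin costs $o(\sqrt{N_j})$ and the liminf argument goes through; but because $\epsilon_j>0$, the twin lands in a \emph{different} small subsquare, and this is exactly what allows the paired process to remain ``locally uniform'' at a finer spatial scale.

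This brings up the second, larger gap: you assert that at the full scales $m_k$ the sample is ``essentially i.i.d.\ uniform,'' but after the construction has already introduced pairing at all smaller scales this is far from obvious, and it is where the real work of the paper lies. The paper isolates a class of \emph{$(\alpha,M)$ locally uniform} processes (in every subsquare of side $\le\alpha$ the points are, up to $M$ exceptions, a genuine i.i.d.\ uniform sample, with a linear variance bound on the counts), proves a BHH-in-mean theorem for this class via a partition-and-stitch argument together with the Redmond--Yukich lower bound, and then shows that each $T_{\epsilon,N}$ preserves local uniformity (with $\alpha$ shrinking below $\epsilon$). This inductive machinery is what licenses the choice of $N_j$ so that $\E[L(\XX^{(j-1)}[0{:}n{-}1])]\sim\beta\sqrt n$ for $n\ge N_j/j$, and hence both halves of \eqref{noBBBinequaliesMean}. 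Your proposal identifies the destination correctly but does not supply---or substitute for---this engine; without it, neither the existence of the coding $g$ with the claimed properties nor the limsup inequality is established.
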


Our construction proves more broadly that there are no ergodic analogs for many of the other subadditive Euclidean functionals
such as the Steiner Tree Problem
\citeaffixed{HwangSteinerBOOK,Yuk:SPRI1998}{cf.}. We will return to these and other
general considerations in Section \ref{se:conclusions} where we also describe some open problems, but first we
explain more fully the motivation for Theorems \ref{th:noBHHThm} and \ref{th:noBHHThmExpectations}.

\subsection*{BHH Theorems for Dependent Sequences: Motivation and Evolution}

The traveling salesman problem (or, TSP) has a remarkably extensive literature;
the monographs of \citeasnoun{Lawler1985}, \citeasnoun{Gutin2002} and \citeasnoun{AppBixChvaCook:PRINCETON2006}
note that it is among the of the most studied of all problems in combinatorial optimization.
Moreover, the TSP has had a role in many practical and computational developments.
For example, the TSP provided important motivation for the theory of NP-completeness,
the design of polynomial time approximations, the methods of constraint generation in the theory of linear programming,
and --- most relevant here ---
the design of probabilistic algorithms.

Probabilistic algorithms are of two fundamentally different kinds.
One class of algorithms uses \emph{internal randomization}; for example, one
may make a preliminary randomization of a list before running {\sc QuickSort}.
The other class of algorithms takes the view that the \emph{problem
input follows a probabilistic model}.
Algorithms of this second kind are especially common in application areas such as vehicle routing and the layout of
very large-scale integrated circuits, or VLSI.

The partitioning algorithm proposed by \citeasnoun{Karp1977}
is an algorithm of the second kind that was directly motivated by the BHH theorem.
The partitioning algorithm was further analyzed in \citeasnoun{SteeleMOR1981}
and \citeasnoun{KarpSteele1985}, and now it is well understood that
for any $\epsilon >0$
the partitioning algorithm produces a path in time $O(n \log n)$ that has length that
is asymptotically almost surely within a factor of $1+\epsilon$ of the length of the optimal path.

\citeasnoun{Aro:JACM1998} subsequently discovered a polynomial time algorithm that will determine a
$(1+\epsilon)$-optimal path for \emph{any set} of $n$ points in Euclidean space, and,
while the \citename{Aro:JACM1998} algorithm is of great theoretical interest, the degree of the polynomial time bound depends on
$1/\epsilon$ in a way that limits its practicality. On the other hand, \citename{Karp1977}'s partitioning
algorithm is immanently practical, and it has been widely used, especially in vehicle routing
problems \citeaffixed{BertsimasEtAlOR1990,BertsimasvanRyzin1993,PavBisFazIsl:IEEE2007}{see e.g.}

Still, at the core of \citename{Karp1977}'s partitioning algorithm is the assumption that the problem instance
$X_1, X_2, \ldots, X_n$ can be viewed as a sequence of independent and identically distributed points in $\R^2$.
While this assumption
is feasible in some circumstances, there are certainly many more real-world problems where one would want to accommodate
dependent sequences. In addition to the direct benefits, any time one
establishes an analog of the BHH theorem for dependent sequences,
one simultaneously robustifies a substantial body of algorithmic work. \citationmode{full}

Some extensions of the BHH theorem are easily done. For example,
one can quickly confirm that the analog of the BHH theorem is valid for an infinite sequence
$Y_1, Y_2,  \ldots$ of exchangeable
random variables with compact support. To see why this is so,
one can appeal to the full theorem
of \citeasnoun{BeaHalHam:PCPS1959} which asserts that for a sequence
$X_1, X_2,  \ldots $ of independent, identically distributed random variables with values in
a compact subset of $\R^d$ one has the almost sure limit \citationmode{default}
\begin{equation}\label{eq:BHHd}
\lim_{n \rightarrow \infty} n^{-(d-1)/d }L(X_1,X_2, \ldots, X_n) =
c_d \int_{\R^d} {f(x)}^{(d-1)/d} \ dx,
\end{equation}
where $f$ is the density of the absolutely continuous part of the distribution of $X_1$ and
$0 < c_d < \infty$ is a constant that depends only on the dimension $d \geq 2$.
Now, given an infinite sequences $Y_1, Y_2, \ldots$  of exchangeable
random variables with values with compact support in $\R^d$,
the de Finetti representation theorem
and the limit \eqref{eq:BHHd} tell us that there is a random variable $0 \leq C(\omega) < \infty$ such that
\begin{equation*}\label{eq:deFinetti}
\lim_{n \rightarrow \infty} n^{-(d-1)/d }L(Y_1,Y_2, ..., Y_n) = C \quad \text{almost surely}.
\end{equation*}
This particular extension of the BHH theorem is just a simple corollary of the original BHH theorem.
Later, in Proposition \ref{pr:BHH4LocalUnif}
of Section \ref{sec:LocalUniformity},
we give another extension of the BHH theorem to what we call \emph{locally uniform} processes. This extension
requires some delicate preparation.

There are also more wide-ranging possibilities.
If one views the BHH theorem as a reflection of the evenness of the asymptotic placement
of the observations, then there is a much different way to seek to generalize the BHH theorem.
One can even obtain analogs of the BHH theorem for certain non-random sequences. These results are rather crude,
but in some contexts, such as VLSI planning, these analogs may be more relevant than the traditional BHH theorem.

Here one considers the sequence of points $x_1,x_2, \ldots, x_n $ in $[0,1]^2$
for which one has strong control of
the rectangle discrepancy that is defined by
$$
D_n=D(x_1,x_2, \ldots, x_n)=\sup_{ Q \in \QQ} \left| \frac{1}{n} \sum_{t=1}^n \1_Q(x_t) - \lambda(Q)\right|,
$$
where $\QQ$ is the set of all axis-aligned rectangles $Q \subseteq [0,1]^2$,
$\1_Q(x)$ is the indicator function that equals one if $x \in Q$,
and  $\lambda(Q)$ is the Lebesgue measure of $Q$.
Inequalities of \citeasnoun{Ste:PAMS1980}, more recently
refined by \citeasnoun{Stei:ORL2010},  then suffice to show that if one has
\begin{equation}\label{eq:Dcondition}
nD_n=o(n^\delta) \quad \text{as } n \rightarrow \infty \quad \text{for all } \delta \in (0,1),
\end{equation}
then one also has the pointwise limit
\begin{equation}\label{logTSP}
\lim_{n \rightarrow \infty} \frac{ \log L(x_1,x_2, \ldots, x_n)}{\log{n}} = \frac{1}{2}.
\end{equation}

The leading example of such a sequence is $x_n= (n \phi_1 \!\!\!\mod 1, \, n \phi_2 \!\!\!\mod 1)$ where
$\phi_1$ and $\phi_2$ are  algebraic irrationals that are linearly independent over the rational numbers.
A deep theorem of \citeasnoun{Schmi:TAMS1964}
tells us that the discrepancy of this sequence satisfies the remarkable estimate
$nD_n =O((\log n)^{3+\gamma})$ for all $\gamma >0$, and this is more than one needs for the discrepancy criterion
\eqref{eq:Dcondition}.

In contrast, for an independent uniformly distributed sequence on $[0,1]^2$, one only has $nD_n =O_p(n^{1/2})$, and this
is vastly weaker than the discrepancy condition \eqref{eq:Dcondition}.
Moreover, even when one does have \eqref{eq:Dcondition}, the conclusion \eqref{logTSP}
falls a long distance from what the BHH theorem gives us.

The point of this review is that there are
good reasons to want a theorem of BHH type for dependent sequences.
Some spotty progress has been made; nevertheless,
we are still far away from being able to give a simple, easily checked, criterion for a dependent
compactly supported sequence to satisfy a limit that parallels the BHH theorem.

In the theory of the law of large numbers there are two natural bookends. On one side one has the Kolmogorov law of large numbers
for independent identically distributed random variables with a finite mean, and on the other side one has
the Birkhoff ergodic theorem.
The BHH theorem initiates a strong law theory for the traveling salesman functional, and, in the fullness of
time, there will surely be analogs of that theorem for various classes of dependent random variables. The perfect
bookend for the theory would be a theorem that asserts that stationarity, uniformity, and ergodicity suffice.
Theorems \ref{th:noBHHThm} and \ref{th:noBHHThmExpectations} show that the limit theory of the TSP cannot be bookended so nicely.

\subsection*{Observations on the Proofs and Methods}

The main idea  of the proof of Theorem \ref{th:noBHHThmExpectations} is that one can construct a  stationary process
such that, along a subsequence of $\tau_1< \tau_2< \cdots$ of successive (and
ever larger) times, the ensemble of observations up to
time $\tau_j$ will alternately either look very much like an independent uniformly
distributed sample or else look like a sample that has far too many ``twin cities,'' i.e.~pairs of points that are excessively close
together on a scale that depends on $\tau_j$. To make this idea precise, we use a sequence of parameterized transformation
of stationary processes where each transformation adds a new epoch with too many twin cities --- \emph{at an appropriate scale}.
Finally, we show that one can build a single stationary process with infinitely many such epochs.

This limit process provides the desired example of a stationary, uniform process for which the
minimal length paths have expectations that behave much differently from those of Beardwood, Halton, and
Hammersley. The ergodic process required by Theorem \ref{th:noBHHThm} is then obtained by an
extreme point argument that uses Choquet's representation of a stationary measure with uniform marginals
as a mixture of stationary ergodic measures with uniform marginals.

Finally, there are a two housekeeping observations.
First, the classical BHH theorem \eqref{eq:BHHd} requires the distribution of the observations to have
support in a compact set, so the difference between the length of the shortest \emph{path}
and the length of shortest \emph{tour} through $X_1, X_2, \ldots, X_n$
is bounded by a constant that does not depend on $n$.
Consequently, in limit a theorem such as Theorem \ref{th:noBHHThm}
the distinction between tours and paths is immaterial.
For specificity, all of the analyses done here
are for the shortest path functional.

Also, in Section \ref{se:transformation}, we further note that it is
immaterial here whether one takes the square $[0,1]^2$ with its natural metric, or if one takes the metric on $[0,1]^2$
to be the metric of the flat torus $\T$ that is obtained from the unit square with
the natural identification of the opposing boundary edges.
Theorems \ref{th:noBHHThm} and \ref{th:noBHHThmExpectations} are stated above for the traditional Euclidean metric, but, as we later
make explicit, most of our analysis will be done with respect to the torus metric.

\section{Two Transformations of a Stationary Sequence: \\
the $H_{\epsilon, N}$ and $T_{\epsilon, N}$ Transformations} \label{se:transformation}

Our construction depends on the iteration of procedures that transform a given stationary process into another
stationary process with additional properties.
For integers $a \leq b$, we denote by $[a:b]$ the set
$\{a, a+1, \ldots, b\}$, and,
given a doubly infinite sequence of random variables $\XX= (\ldots, X_{-1}, X_0, X_1, \ldots)$,
we define the $[a: b]$ segment of $\XX$ to be the subsequence
\begin{equation}\label{eq:segment}
\XX{[a:b]}= ( X_a, X_{a+1},\ldots, X_{b} ).
\end{equation}
Now, given an integer $p$, we say that the process $\XX$ is \emph{periodic in distribution with period $p$} if we have
\begin{equation*}\label{eq:DefPeriodic}
\XX{[a:a+k]} \stackrel{d}{=} \XX{[b:b+k]} \quad \text{for all $k \geq 0$ and all $b$ such that } b = a\!\!\!\mod p.
\end{equation*}
This is certainly a weaker condition than  stationarity, but, by an old randomization trick,
one can transform a process that is periodic in distribution to a closely related process that is  stationary.
We will eventually apply this construction infinitely many times,
so to fix ideas and notation, we first recall how it works in the simplest setting.

\begin{lemma}[Passage from Periodicity in Distribution to Stationarity]\label{lm:PassageFromPeriodicity}
If the $\R^d$-valued doubly infinite sequence $\widehat \XX = (\ldots, \widehat X_{-1}, \widehat X_0, \widehat X_1, \ldots)$
is periodic in distribution with period $p$,
and, if $I$ is chosen independently and uniformly from $[0:p-1]$, then the
doubly infinite sequence $\widetilde{\XX} = (\ldots, \widetilde X_{-1}, \widetilde X_0, \widetilde X_1, \ldots)$ defined by setting
$$
\widetilde X_t =\widehat X_{t+I} \quad  \text{for all } t \in \Z
$$
is a stationary process.
\end{lemma}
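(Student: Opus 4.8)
The plan is to verify stationarity directly from the definition, namely that for every $k \geq 0$ and every $t \in \Z$ the law of $\widetilde{\XX}[t:t+k]$ does not depend on $t$. Equivalently, it suffices to show that $\widetilde{\XX}[t:t+k] \eqdist \widetilde{\XX}[t+1:t+k+1]$ for all $t$ and $k$, and then iterate. I would begin by writing, for a bounded measurable test function $g \colon (\R^d)^{k+1} \to \R$ and using that $I$ is independent of $\widehat{\XX}$ and uniform on $[0:p-1]$,
\begin{equation*}
\E\bigl[ g(\widetilde X_t, \ldots, \widetilde X_{t+k}) \bigr]
= \frac{1}{p} \sum_{i=0}^{p-1} \E\bigl[ g(\widehat X_{t+i}, \ldots, \widehat X_{t+i+k}) \bigr].
\end{equation*}

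The key step is then to observe that shifting $t$ to $t+1$ merely reindexes this sum: the map $i \mapsto i+1$ is a bijection of $\Z/p\Z$, so
\begin{equation*}
\frac{1}{p} \sum_{i=0}^{p-1} \E\bigl[ g(\widehat X_{t+1+i}, \ldots, \widehat X_{t+1+i+k}) \bigr]
= \frac{1}{p} \sum_{j=0}^{p-1} \E\bigl[ g(\widehat X_{t+j}, \ldots, \widehat X_{t+j+k}) \bigr],
\end{equation*}
where we substitute $j = i+1$ and then note that the single summand with $j = p$ can be replaced by the summand with $j = 0$: indeed, $\widehat{\XX}[t+p : t+p+k] \eqdist \widehat{\XX}[t : t+k]$ because $t + p = t \bmod p$, which is exactly the hypothesis that $\widehat{\XX}$ is periodic in distribution with period $p$. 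This shows the two averages agree, hence $\widetilde{\XX}[t:t+k] \eqdist \widetilde{\XX}[t+1:t+k+1]$, and stationarity follows.

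The argument is essentially a one-line computation, so there is no serious obstacle; the only point that needs a moment of care is the bookkeeping in the index shift modulo $p$ — one must use the periodicity-in-distribution hypothesis precisely to wrap the index $p$ back to $0$, and it is worth remarking that this works for an arbitrary block $\widehat{\XX}[a:a+k]$ with $a = b \bmod p$, not just for consecutive integers. I would close by noting that since finite-dimensional distributions determine the law of the process, the coincidence of all finite-dimensional marginals under every shift is exactly the assertion that $\widetilde{\XX}$ is stationary.
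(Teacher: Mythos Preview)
Your proposal is correct and follows essentially the same route as the paper: condition on $I$, expand as an average over $i \in [0:p-1]$, and use the periodicity-in-distribution hypothesis to identify the summand at $i=p$ with the one at $i=0$, so that the shift $t \mapsto t+1$ merely cyclically reindexes the sum. The only cosmetic difference is that the paper works with Borel rectangles $A_0,\ldots,A_j$ while you work with bounded measurable test functions $g$; these formulations are equivalent.
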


\begin{proof}
Fix $0\leq j < \infty$ and take Borel subsets $A_0$, $A_1$, \ldots, $A_j$ of  $\R^d$.
By the definition of $\widetilde{\XX}$ and by conditioning on $I$, one then has
\begin{align}
\P(\widetilde X_{t}\in A_0,\, &\widetilde X_{t+1}\in A_1, \ldots, \widetilde X_{t+j}\in A_j) \notag \\
&=
\frac{1}{p}\sum_{i=0}^{p-1} \P(\widehat X_{t+i}\in A_1, \widehat X_{t+1+i}\in A_2, \ldots , \widehat X_{t+j+i}\in A_j)\notag\\
&=
\frac{1}{p}\sum_{i=0}^{p-1} \P(\widehat X_{t+1+i}\in A_1, \widehat X_{t+2+i}\in A_2, \ldots , \widehat X_{t+1+j+i}\in A_j) \label{eq:RandomSumB} \\
&=
\P(\widetilde X_{t+1}\in A_0, \widetilde X_{t+2}\in A_1, \ldots, \widetilde X_{t+1+j}\in A_j), \notag
\end{align}
where the periodicity in distribution of $\widehat \XX$ is used to obtain \eqref{eq:RandomSumB}. Specifically, by periodicity in
distribution, the last summand of \eqref{eq:RandomSumB} is equal to the first summand of the preceding sum.
This tells us that $\P(\widetilde X_{t}\in A_0, \widetilde X_{t+1}\in A_1, \ldots, \widetilde X_{t+j}\in A_j)$ does not depend on
$t$, and, since $j$ is arbitrary, we see that $\widetilde \XX $ is stationary.
\end{proof}

In the analysis of shortest paths in $[0,1]^2$,
there are three useful distances that one can consider.
One has (a) the traditional Euclidean distance,
(b) the torus distance where one identifies opposite boundary edges and \
(c) the ``Free-on-Boundary'' distance where the cost of travel along any boundary edge is taken to be zero.
For the moment, we let the  length of the shortest path through
the points $x_1,x_2, \ldots, x_n$ under the Euclidean, torus, and ``Free-on-Boundary'' distances
be denoted by $L_E(\cdot)$, $L_T(\cdot)$ and $L_B(\cdot)$ respectively, and we
note that one can show that
\begin{align}\label{eq:threeDistances}
L_B(x_1,x_2, \ldots, x_n) & \leq L_T(x_1,x_2, \ldots, x_n) \\
                          & \leq L_E(x_1,x_2, \ldots, x_n) \leq  L_B(x_1,x_2, \ldots, x_n)+4. \nonumber
\end{align}

The first two inequalities are obvious, and the third is easy if one draws the right picture.
Specifically,
given a path of minimal length under the ``Free-on-Boundary" distance, it may or may not connect with the boundary,
but if it does connect with the boundary then
one can always choose a path of minimal ``Free-on-Boundary"
length that never traverses any part of the boundary more than once.
Consequently, if instead of riding for free one were to pay the full Euclidean cost of this travel,
then the added cost of this boundary travel would be generously bounded by the
total length of the boundary, and consequently one has the last inequality of \eqref{eq:threeDistances}.
This argument of \citename{RedYuk:AAP1994} \citeyear{RedYuk:AAP1994,RedYuk:SPA1996}
has natural analogs in higher dimensions and for other functionals, see \citeasnoun[pp.~12--17]{Yuk:SPRI1998} for
further discussion and details.
Incidently, one should also note that the torus model was considered earlier in the analyses of the minimum
spanning tree problem
by \citeasnoun{AvrBer:AOAP1992} and \citeasnoun{Jai:AAP1993}.

An immediate implication of the bounds \eqref{eq:threeDistances}
is that if the asymptotic relation \eqref{eq:BHHlimit}
holds for any one of the three choices for the distance on $[0,1]^2$, then it holds for all three.
Here we will find it convenient to work with the
torus distance, since this choice gives us a convenient additive group structure.
In particular, for $X=(\xi,\xi') \in \T$ and $0 < \epsilon < 1$,
we can define the $\epsilon$-translation $X(\epsilon)$ of $X$, by setting
\begin{equation}\label{eq:translation-def}
X(\epsilon)=(\, (\xi+\epsilon) \!\!\! \mod 1, \, \xi').
\end{equation}
That is, we get $X(\epsilon)$ by shifting $X$ by $\epsilon$ in \emph{just the first coordinate} and the shift is taken modulo $1$.

We now consider a doubly infinite stationary process $\XX=(\ldots , X_{-1}, X_0, X_1,  \ldots)$ where each coordinate
$X_t$ takes its value in the flat torus $\T$.
Given $N \in \N$, we then define blocks  $B_k$, $k \in \Z$, of length $2N$ by setting
\begin{equation}\label{eq:block}
B_k=( X_{kN}, X_{kN+1}, \ldots, X_{(k+1)N-1}, X_{kN}(\epsilon), X_{kN+1}(\epsilon), \ldots, X_{(k+1)N-1}(\epsilon) ),
\end{equation}
where the translations $X_t(\epsilon)$ for $t \in [kN:(k+1)N-1]$, are defined as in \eqref{eq:translation-def}.
We write the doubly infinite concatenation of these blocks as
$$
(\ldots, B_{-2}, B_{-1}, B_0, B_1, B_2, \ldots),
$$
and we note that this gives us a doubly infinite sequence of $\T$-valued  random variables that we may also write as
\begin{equation*}
\widehat \XX=(\ldots, \widehat X_{-2}, \widehat X_{-1}, \widehat X_0, \widehat X_1, \widehat X_2, \ldots ).
\end{equation*}
The process $\widehat \XX=( \widehat X_t : t \in \Z )$ is called the \emph{hat process},
the passage from $\XX$ to $\widehat \XX$ is called a $H_{\epsilon, N}$ \emph{transformation}, and we write
\begin{equation}\label{eq:H-transformation}
 \widehat{\XX} = H_{\epsilon, N} ( \XX ).
\end{equation}

It is useful to note that the hat process $\widehat \XX=(\widehat X_t : t \in \Z)$ is periodic in distribution with period $2N$, so
one can use Lemma \ref{lm:PassageFromPeriodicity} to construct a closely related stationary sequence
$\widetilde \XX =(\widetilde X_t  : t \in \Z)$. Specifically, we set
\begin{equation*}
\widetilde X_t =\widehat X_{t+I}\quad \text{for all } t \in \Z,
\end{equation*}
where the random index $I$ has the uniform distribution on $[0:2N-1]$ and $I$ is independent of the sequence $\widehat \XX$.
The complete passage from $\XX$ to $\widetilde \XX$ is called a $T_{\epsilon, N}$ \emph{transformation}, and it is denoted by
\begin{equation*}\label{eq:T-transformation}
\widetilde \XX = T_{\epsilon, N} (\XX).
\end{equation*}
We will make repeated use of the two-step nature of this construction, and we stress that
the hat process $\widehat \XX$ is more than an intermediate product. The properties of
the hat process $\widehat \XX$ are the real guide to our constructions, and the
stationary process $\widetilde \XX$ is best viewed as a polished version of $\widehat \XX$.

\subsection*{Properties of the $T_{\epsilon, N}$ transformation}\label{sec:PreservedProperties}

The process $\widetilde \XX$ that one obtains from $\XX$  by a $T_{\epsilon, N}$ transformation
retains much of the structure of $\XX$. We begin with a simple example.

\begin{lemma}[Preservation of Uniform Marginals]\label{lm:UniformPreservation}
If $\XX = (X_t: t \in \Z)$ is a doubly infinite $\T$-valued process such that
$X_t$ has the uniform distribution on $\T$ for each $t \in \Z$ and if
$$
\widetilde \XX = T_{\epsilon, N} (\XX) = (\widetilde X_t: t \in \Z) \quad \text{where } 0< \epsilon < 1 \text{ and } N \in \N,
$$
then $\widetilde X_t$ has the uniform distribution on $\T$ for each $t \in \Z$.
\end{lemma}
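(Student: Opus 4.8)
The plan is to track the uniform marginal through the two stages of the $T_{\epsilon, N}$ transformation: first verify that the intermediate hat process $\widehat \XX = H_{\epsilon, N}(\XX)$ already has uniform marginals on $\T$, and then transfer this to $\widetilde \XX$ via the randomization of Lemma \ref{lm:PassageFromPeriodicity}.

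For the first stage, I would note that by the block definition \eqref{eq:block} every coordinate $\widehat X_t$ of the hat process equals, for a suitable index $s = s(t) \in \Z$, either $X_s$ or its $\epsilon$-translation $X_s(\epsilon)$. In the former case $\widehat X_t$ is uniform on $\T$ by hypothesis. In the latter case, recall from \eqref{eq:translation-def} that $x \mapsto x(\epsilon)$ is the rotation of $\T$ shifting the first coordinate by $\epsilon \bmod 1$ and fixing the second coordinate; this is a measure-preserving bijection of $\T$, since the uniform measure $\lambda$ on $\T$ is invariant under translations of the additive group structure of $\T$. Hence $X_s(\epsilon)$ has the same (uniform) law as $X_s$, and so $\widehat X_t$ is uniform on $\T$ for every $t \in \Z$.

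For the second stage, the construction of $\widetilde \XX$ gives $\widetilde X_t = \widehat X_{t+I}$ with $I$ uniform on $[0:2N-1]$ and independent of $\widehat \XX$. Conditioning on $I$, for any Borel set $A \subseteq \T$ one obtains
$$
\P(\widetilde X_t \in A) = \frac{1}{2N}\sum_{i=0}^{2N-1}\P(\widehat X_{t+i}\in A) = \frac{1}{2N}\sum_{i=0}^{2N-1}\lambda(A) = \lambda(A),
$$
where the middle equality uses the first stage. Thus $\widetilde X_t$ has the uniform distribution on $\T$ for each $t$, which is the assertion.

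There is no serious obstacle here; this is a bookkeeping lemma. The only point deserving care is the claim that the $\epsilon$-translation preserves the uniform law on $\T$, and this is precisely where working with the torus metric (and its group structure) rather than with the plain Euclidean square is essential: translation invariance of the uniform measure would fail near the boundary of $[0,1]^2$. Everything else follows directly from the definitions and from Lemma \ref{lm:PassageFromPeriodicity}.
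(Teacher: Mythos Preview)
Your proof is correct and follows essentially the same two-step approach as the paper: first observe that each $\widehat X_t$ is uniform on $\T$ because it is either some $X_s$ or its $\epsilon$-translate (and the torus translation preserves the uniform law), then note that $\widetilde X_t$ is a mixture of uniform distributions and hence uniform. The paper's proof is simply a more compressed version of exactly this argument.
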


\begin{proof}
Here it is immediate from the blocking and shifting steps of the $H_{\epsilon,N}$ transformation that $\widehat{X}_t$ has the uniform
distribution on $\T$ for each  $t \in \Z$.  Also, by construction, the distribution of $\widetilde X_t$ is then a mixture of
uniform distributions, and therefore $\widetilde X_t$ also has the uniform distribution on $\T$.
\end{proof}

Given a doubly infinite  sequence $\XX = ( \ldots, X_{-1}, X_0, X_1, \ldots )$  the \emph{translation} of $\XX$
is the process $\XX(\delta)$ defined by setting
$$\XX(\delta) = (X_t (\delta): t \in \Z),$$
where,
as before, we have $X_t= (\xi_t, \, \xi_t') \in \T$ and  $X_t (\delta) = (\xi_t + \delta , \, \xi_t')$ with
the addition in the first coordinate that is taken modulo one. We also
say that $\XX$ is \emph{translation invariant} if for each $\delta > 0 $ we have $\XX \stackrel{d}{=}\XX(\delta)$.
Next, we check that translation invariance of a process $\XX$
is preserved under any $T_{\epsilon, N}$ transformation.

\begin{lemma}[Preservation of Translation Invariance]\label{lm:translation-invariance}
If $\XX$ is a doubly infinite, translation invariant, $\T$-valued process, then for each
$0< \epsilon < 1$ and $N \in \N$, the process $\widetilde \XX$ defined by
$$
\widetilde \XX = T_{\epsilon, N} (\XX)
$$
is also translation invariant.
\end{lemma}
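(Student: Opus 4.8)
The plan is to reduce the statement to a single algebraic observation: the $\epsilon$-translation used inside the block construction \eqref{eq:block} and the $\delta$-translation of the lemma both act only on the first coordinate and both act modulo one, hence they commute. This will let us push a $\delta$-translation through the entire $T_{\epsilon,N}$ transformation, handling the deterministic ($H_{\epsilon,N}$) part and the randomization part in turn.

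First I would record the commutation: for any $X=(\xi,\xi')\in\T$ and any $\delta,\epsilon$ one has $X(\delta)(\epsilon)=((\xi+\delta+\epsilon)\bmod 1,\,\xi')=X(\epsilon)(\delta)$. Consequently, if one forms the blocks \eqref{eq:block} starting from the translated process $\XX(\delta)$ rather than from $\XX$, then the $k$-th block is
$$
( X_{kN}(\delta), \ldots, X_{(k+1)N-1}(\delta),\, X_{kN}(\delta)(\epsilon), \ldots, X_{(k+1)N-1}(\delta)(\epsilon) ),
$$
which, by the commutation, is exactly the $k$-th block built from $\XX$ with \emph{every} one of its $2N$ entries translated by $\delta$. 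Concatenating over $k\in\Z$, this says that the hat process of $\XX(\delta)$ is the hat process of $\XX$ with all coordinates translated by $\delta$; that is,
$$
H_{\epsilon,N}(\XX(\delta)) = \bigl(H_{\epsilon,N}(\XX)\bigr)(\delta)
$$
as an identity between random sequences on the same probability space. Since $\XX\mapsto H_{\epsilon,N}(\XX)$ is a Borel-measurable map on sequence space and $\XX\eqdist\XX(\delta)$ by hypothesis, applying this map to both sides gives $\widehat\XX\eqdist\widehat\XX(\delta)$; in other words the hat process is itself translation invariant.

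Next I would deal with the randomization step. Write $\widehat\YY=\widehat\XX(\delta)$, so that $\widehat Y_s=\widehat X_s(\delta)$ for all $s$ and, by the previous paragraph, $\widehat\YY\eqdist\widehat\XX$. Because the index $I$ is uniform on $[0:2N-1]$ and chosen independently of $\widehat\XX$ --- hence independently of the deterministic function $\widehat\YY$ of $\widehat\XX$ --- the pairs $(\widehat\YY,I)$ and $(\widehat\XX,I)$ have the same joint law. Now $\widetilde\XX(\delta)=(\widehat X_{t+I}(\delta):t\in\Z)=(\widehat Y_{t+I}:t\in\Z)$, while $\widetilde\XX=(\widehat X_{t+I}:t\in\Z)$, and both are produced from a sequence and an index by the \emph{same} measurable ``shift-by-$I$'' operation. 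Applying that operation to the two equal-in-law pairs yields $\widetilde\XX(\delta)\eqdist\widetilde\XX$, and since $\delta>0$ was arbitrary, $\widetilde\XX$ is translation invariant.

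The only place the argument could go wrong --- and the step I would be most careful about --- is the claim that forming blocks from $\XX(\delta)$ reproduces the blocks of $\widehat\XX$ with \emph{all} $2N$ entries, including the already $\epsilon$-shifted second half of each block, translated by $\delta$; this is exactly what the commutation $X(\delta)(\epsilon)=X(\epsilon)(\delta)$ supplies. Everything after that is routine bookkeeping: measurable maps preserve equality in distribution, and the random shift is independent of the process it is applied to.
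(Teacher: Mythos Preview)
Your proof is correct and follows essentially the same route as the paper: both establish the pointwise identity $H_{\epsilon,N}(\XX(\delta)) = (H_{\epsilon,N}(\XX))(\delta)$ from the commutation of the two first-coordinate shifts, deduce translation invariance of the hat process from $\XX\eqdist\XX(\delta)$, and then push this through the independent random index shift. Your treatment of the randomization step via the joint law of $(\widehat\XX,I)$ is slightly more explicit than the paper's, but the argument is the same.
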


\begin{proof}
The crux of the matter is that a translation by $\delta$ and an application of the $H_{\epsilon,N}$ transformation
are pointwise commutative. In symbols one has
\begin{equation}\label{eq:H-translation-equality}
H_{\epsilon,N}(\XX(\delta)) = ( H_{\epsilon,N}(\XX)) (\delta),
\end{equation}
where on the left we translate $\XX$ by $\delta$
and then apply $H_{\epsilon,N}$, and
on the right we apply $H_{\epsilon,N}$ and then translate by $\delta$.
A formal proof of this identity only requires one to unwrap the definition of $H_{\epsilon,N}$
and to use commutativity of addition.

Now, by our hypothesis, $\XX$ is equal in distribution to  $\XX(\delta)$, and, since
equality in distribution is preserved by a $H_{\epsilon,N}$ transformation, we also have
\begin{equation}\label{eq:H-translation-invariance}
H_{\epsilon,N}(\XX) \stackrel{d}{=}  H_{\epsilon,N}(\XX(\delta)) =( H_{\epsilon,N}(\XX)) (\delta),
\end{equation}
where in the second equality we used \eqref{eq:H-translation-equality}.
When we shift the indices of two processes that are equal in distribution by an independent shift $I$, the resulting processes
are again equal in distribution. Thus, when make the shift on each side of \eqref{eq:H-translation-invariance}
that is required by the definition of the
$T_{\epsilon,N}$ transformation, we have
$$
\widetilde \XX=T_{\epsilon,N}(\XX) \stackrel{d}{=}( T_{\epsilon,N}(\XX)) (\delta)=\widetilde \XX (\delta),
$$
just as we needed.
\end{proof}

The process $\widetilde \XX$ that one obtains from a doubly infinite stationary sequence $\XX$
by a $T_{\epsilon, N}$ transformation is typically singular with respect to $\XX$.
Nevertheless,  on a short segment the two processes are close in distribution. The next lemma
makes this precise.

\begin{lemma}[Closeness in Distribution]\label{lm:CloseInDistribution}
Let $\XX$ be a translation invariant doubly infinite stationary sequence with values in the flat torus $\T$.
For each $0 < \epsilon < 1$ and $N \in \N$, the process $\widetilde \XX$ defined by
$
\widetilde \XX = T_{\epsilon, N} (\XX)
$
satisfies
\begin{equation}\label{eq:ClosenessIneq}
|\, \P(\widetilde \XX[0:m] \in \A) - \P(\XX[0:m] \in \A) \,| \leq \frac{m}{N},
\end{equation}
for all Borel sets $\A \subseteq \T^{m+1}$ and for all  $m=0, 1,2, \ldots$.
\end{lemma}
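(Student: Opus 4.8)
The plan is to track exactly how much probability mass the random index $I$ can move a window of length $m+1$ away from a ``clean'' location where $\widetilde\XX$ agrees in distribution with $\XX$. Recall that $\widehat\XX = H_{\epsilon,N}(\XX)$ is built by concatenating blocks $B_k$ of length $2N$, where the first $N$ coordinates of $B_k$ are $X_{kN},\dots,X_{(k+1)N-1}$ and the last $N$ are their $\epsilon$-translates. Since $\widetilde X_t = \widehat X_{t+I}$ with $I$ uniform on $[0:2N-1]$ and independent of $\widehat\XX$, I would condition on $I$ and write
\begin{equation*}
\P(\widetilde\XX[0:m]\in\A) = \frac{1}{2N}\sum_{i=0}^{2N-1}\P(\widehat\XX[i:i+m]\in\A).
\end{equation*}
The key observation is that the window $[i:i+m]$ sits entirely within the ``un-translated'' first half of a single block $B_0$ precisely when $0 \le i$ and $i+m \le N-1$, i.e.\ for $i \in [0:N-1-m]$, and on that event $\widehat\XX[i:i+m] = \XX[i:i+m]$. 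By translation invariance of $\XX$ (hence stationarity gives $\XX[i:i+m]\eqdist\XX[0:m]$; one should note that for the window lying in the second half of a block one gets $\XX[\cdot]$ shifted by $\epsilon$, which by translation invariance is again distributed as $\XX[0:m]$), each such summand equals $\P(\XX[0:m]\in\A)$.

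Next I would split the sum into ``good'' indices $G = \{i : [i:i+m] \subseteq [0:N-1] \text{ or } [i:i+m]\subseteq[N:2N-1]\}$ and ``bad'' indices $B^c = [0:2N-1]\setminus G$, where the window straddles the boundary between the un-translated and translated halves (or, near $i$ close to $2N-1$, straddles into the next block). The number of bad indices is at most $2m$: there are at most $m$ values of $i$ for which the window crosses the mid-block boundary at $N$, and at most $m$ for which it crosses the block boundary at $2N$. For every good index the summand is exactly $\P(\XX[0:m]\in\A)$, so
\begin{equation*}
\P(\widetilde\XX[0:m]\in\A) = \frac{|G|}{2N}\,\P(\XX[0:m]\in\A) + \frac{1}{2N}\sum_{i\in B^c}\P(\widehat\XX[i:i+m]\in\A),
\end{equation*}
and the difference $\P(\widetilde\XX[0:m]\in\A) - \P(\XX[0:m]\in\A)$ has absolute value at most $\frac{|B^c|}{2N}\cdot 1 \le \frac{2m}{2N} = \frac{m}{N}$, since both $\P(\XX[0:m]\in\A)$ and each of the at most $2m$ extra terms lie in $[0,1]$. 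This gives \eqref{eq:ClosenessIneq}.

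The only slightly delicate point — and the step I would write most carefully — is the bookkeeping that identifies which summands are exactly $\P(\XX[0:m]\in\A)$ and the verification that there are at most $2m$ exceptional indices; this is purely a matter of unwrapping the block definition \eqref{eq:block} and using \eqref{eq:translation-def} together with translation invariance to absorb the $\epsilon$-shift in the second half of a block. Everything else is the elementary observation that averaging a function valued in $[0,1]$ and changing at most $2m$ of its $2N$ arguments perturbs the average by at most $2m/(2N)$. Note that the hypothesis of translation invariance is genuinely used (to handle the translated half-blocks); stationarity of $\XX$ alone would only handle the windows in the un-translated halves.
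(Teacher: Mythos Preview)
Your proof is correct and follows essentially the same route as the paper: condition on $I$, split the $2N$ summands into the good set $G$ of indices for which the window lies entirely in one half-block (where stationarity and translation invariance give $\P(\widehat\XX[i:i+m]\in\A)=\P(\XX[0:m]\in\A)$) and the at most $2m$ bad indices, then bound the resulting difference by $|G^c|/(2N)\le m/N$. The only cosmetic differences are notational (you write $B^c$ for the bad set and say ``at most $2m$'' where the paper computes $|G'|=2m$ assuming $m<N$, the case $m\ge N$ being trivial).
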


\begin{proof}
Recalling the two-step construction that takes one from  $\XX$ to $\widetilde \XX$,
we first note that we can write $\widetilde \XX[0:m]$ in terms of
the hat process $\widehat \XX$ given by the construction \eqref{eq:H-transformation}; specifically, we have
\begin{equation*}
\widetilde \XX[0:m]=\widehat \XX[I:I+m],
\end{equation*}
where the random variable $I$ is independent of $\widehat \XX$ and uniformly distributed on $\{0, 1, \ldots,  2N-1\}$.
Now we condition on the value $i$ of $I$.
For any $i$ such that $[i:i+m] \subseteq [0:N-1]$, the definition of the hat process gives us the distributional identity
\begin{equation}\label{eq:eqdist-1}
\widehat \XX[i:i+m]=\XX[i:i+m] \stackrel{d}{=} \XX[0:m],
\end{equation}
where in the last step we used the stationarity of $\XX$. Similarly, for $i$ such that $ [i:i+m] \subseteq [N: 2N-1]$, we have
\begin{equation}\label{eq:eqdist-2}
\widehat \XX[i:i+m]=\XX[i-N:i-N+m](\epsilon) \stackrel{d}{=} \XX[i-N:i-N+m]\stackrel{d}{=} \XX[0:m],
\end{equation}
where, in the next-to-last step, we use the translation invariance of $\XX$
and in the last step we again used the stationarity of $\XX$.

We now consider the ``good set'' of indices
$$
G=\{i: 0 \leq i \leq i+m < N \text{ or } N \leq i \leq i+m < 2N\},
$$
where the equalities \eqref{eq:eqdist-1} and \eqref{eq:eqdist-2} hold,
and we also consider the complementary ``bad set'' of indices
$
 G' = [0:2N-1] \backslash G.
$
If we condition on $I$ and use  \eqref{eq:eqdist-1} and \eqref{eq:eqdist-2}, then we see that for our Borel set
$\A \subseteq \T^{m+1}$ one has
\begin{align*}
\P(\widetilde \XX[0:m] \in \A)
& = \frac{1}{2 N} \sum_{i \in G} \P ( \widehat \XX[i:i+m] \in \A )
  + \frac{1}{2 N} \sum_{i \in G'} \P ( \widehat \XX[i:i+m] \in \A ) \\
& = \frac{2 N - 2 m }{2 N} \, \P (\XX[0:m] \in \A )
  + \frac{1}{2 N} \sum_{i \in G'} \P ( \widehat \XX[i:i+m] \in \A ),
\end{align*}
which one can then write more nicely as
\begin{align*}
\P  (\widetilde \XX[0:m] \in \A )  - \P (\XX[0:m] \in \A )
& =   - \frac{ m }{ N} \, \P (\XX[0:m] \in \A ) \\
& + \frac{1}{2 N} \sum_{i \in G'} \P ( \widehat \XX[i:i+m] \in \A ).
\end{align*}
The last sum has only $|G'| = 2 m $ terms, so we have the bounds
$$
- \frac{m }{N} \leq \P (\widetilde \XX[0:m] \in \A )  - \P (\XX[0:m] \in \A )  \leq \frac{m}{N}
$$
that complete the proof of the lemma.
\end{proof}

\section{Locally Uniform Processes and BHH in Mean}\label{sec:LocalUniformity}

Our inductive construction requires an extension of
the Beardwood, Halton and Hammersley theorem to a certain class of dependent processes that we
call \emph{locally uniform processes}. The definition of these processes requires some notation.

First, given any Borel set $A \subseteq \T$ and any set of indices $J \subseteq \Z$, we let
$$
N(A, \YY[J]) = \sum_{t \in J} \1(Y_t \in A),
$$
so $N(A, \YY[J])$ is the number of elements of $\YY[J]=(Y_t: t \in J)$ that fall in the Borel set $A$.
We also say that a subset $Q \subseteq \T$ is a \emph{subsquare of side length $\alpha$} if it
can be written as $[x, x+\alpha] \times [y, y+\alpha]$
where one makes the usual
identifications of the points in the flat torus.
Finally, given any $0 < \alpha \leq 1$, we let $\QQ(\alpha)$ denote the set of all subsquares of $\T$ that have
side length less than or equal to $\alpha$.

We further let
$$
(U_s(A): 1 \leq s < \infty)
$$
denote an infinite sequence of independent random variables with
the uniform distribution on the Borel set $A \subseteq \T$; in particular, $\S = \{U_s(A): 1 \leq s \leq n\}$ is a
uniform random sample from $A$ with cardinality $|\S|=n$.
Finally, if  $\S$ and $\S'$  are two random finite subsets of  the Borel set $A \subseteq \T$, we write
$$
\S \stackrel{\rm dpp}{=} \S'
$$
to indicate that $\S$ and $\S'$  are equal in distribution as point processes.

\begin{definition}[Locally Uniform Processes]\label{def:locally-uniform-process}
If $0 < \alpha \leq 1$ and $M \in \N$ we say that a $\T$-valued process $\YY=(Y_t: t \in \Z )$
with uniform marginal distributions
is an \emph{$(\alpha, M)$ locally uniform process}
provided that it satisfies the two following conditions:

\begin{enumerate}[align=left,labelindent=!,labelwidth=!,labelsep=0pt]
    \item[\eqnum\label{eq:VarCondition}] \textsc{Variance Condition.}
            There is a constant $C < \infty$ that depends only on the distribution of $\YY$
            such that for each pair of integers $a\leq b$ and each Borel set $A \subseteq Q \in \QQ(\alpha)$ we have
            \begin{equation*}
            \Var [ N(A, \YY[a:b]) ] \leq  C|b-a+1|.
            \end{equation*}

    \item[\eqnum\label{eq:ConditionallyUniform}] {\sc Local Uniformity Condition.}
            For each pair of integers $a\leq b$ and each Borel set $A \subseteq Q \in \QQ(\alpha)$
            there is a random set
            $$
            \S \subseteq \{ Y_t:  Y_t \in A \text{ and } t \in [a:b]\}
            $$
            for which one has the cardinality bounds
            $$
            N(A, \YY[a:b]) - M \leq | \S | \leq N(A, \YY[a:b]),
            $$
            and the distributional identity
            \begin{equation*}
            \S \stackrel{\rm dpp}{=} \{U_s(A): 1 \leq s \leq |\S| \}.
            \end{equation*}
\end{enumerate}
\end{definition}

Here one should note that if
$\alpha = 1$ and $M = 0$, then a locally uniform process is just a sequence of independent
random variables with the uniform distribution on the flat torus $\T$.
More generally,
the parameter $\alpha$ quantifies the scale at which
an $(\alpha, M)$ locally uniform process looks almost like a sample of independent uniformly distributed random points,
and the parameter $M$ bounds the size of an exception set that can be discarded to
achieve exact uniformity.

One should also note that in a locally uniform process $\YY =(Y_t: t \in \Z )$ each $Y_t$ has the uniform distribution on $\T$, but
$\YY$ is not required to be a stationary process. This will be important to us later.
The first observation is that
despite the possible lack of stationarity, one can still show that
locally uniform processes satisfy a relaxed version of the BHH theorem.

\begin{proposition}[BHH in Mean for Locally Uniform Processes]\label{pr:BHH4LocalUnif}
If the $\T$-valued process $\YY =(Y_t: t \in \Z )$ is
$(\alpha,M)$ locally uniform for some $0 < \alpha \leq 1$ and some $M \in \N$, then one has
\begin{equation*}\label{eq:BHH4alpha}
\E[L(Y_1, Y_2, \ldots, Y_n)] \sim \beta n^{1/2} \quad \text{ as } n \rightarrow \infty,
\end{equation*}
where $\beta$ is the BHH constant in \eqref{eq:BHHlimit}.
\end{proposition}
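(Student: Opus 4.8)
The plan is to transplant the classical subadditive Euclidean functional proof of the Beardwood--Halton--Hammersley limit into a \emph{localized} setting: I would cut the torus into a grid of subsquares whose side length is below the scale $\alpha$, and inside each cell use the two defining properties of an $(\alpha,M)$ locally uniform process to replace the observed points by a genuine independent uniform sample, up to a negligible correction.

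In detail, fix $m\in\N$ with $1/m\le\alpha$ and partition $\T$ into the $m^2$ congruent subsquares $Q_1,\ldots,Q_{m^2}$ of side $1/m$, each of which lies in $\QQ(\alpha)$. Set $N_j=N(Q_j,\YY[1:n])$, and let $\S_j\subseteq\{Y_t: Y_t\in Q_j,\ 1\le t\le n\}$ be the random set furnished by the Local Uniformity Condition, so that $N_j-M\le |\S_j|\le N_j$ and, conditionally on $|\S_j|=\ell$, the set $\S_j$ is an independent uniform sample of size $\ell$ in $Q_j$. Write $\phi(k)=\E[L(U_1(\T),\ldots,U_k(\T))]$ for the BHH mean on the full torus; rescaling a side-$(1/m)$ square gives $\E[L(\S_j)\mid |\S_j|=\ell]=(1/m)\phi(\ell)$, while \eqref{eq:BHHlimit} together with the deterministic bound $L(x_1,\ldots,x_k)\le c\,k^{1/2}$ (bounded convergence) yields that $\phi$ is nondecreasing, $\phi(k)\le c\,k^{1/2}$ for all $k$, and $\phi(k)\sim\beta k^{1/2}$.

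Then I would establish the two matching bounds by the familiar grid estimates. For the upper bound, one joins optimal paths inside the cells along a snake-order traversal of the grid (extra cost $O(m)$) and, inside cell $Q_j$, reinserts into the path the at most $M$ points of $\{Y_t\in Q_j\}\setminus\S_j$ at cost at most $M\sqrt2/m$ each, which gives
\begin{equation*}
\E[L(Y_1,\ldots,Y_n)]\ \le\ \frac1m\sum_{j=1}^{m^2}\E[\phi(N_j)]\ +\ O(m).
\end{equation*}
For the lower bound, one uses the superadditivity of the free-on-boundary functional $L_B$ over the grid partition and the inequality $L_E\le L_B+4$ from \eqref{eq:threeDistances} rescaled to side $1/m$, together with the monotonicity of $L$ under the inclusion $\S_j\subseteq\{Y_t\in Q_j\}$ and the elementary estimate $\phi(N_j)-\phi((N_j-M)^+)\le M\sqrt2$, to obtain
\begin{equation*}
\E[L(Y_1,\ldots,Y_n)]\ \ge\ \frac1m\sum_{j=1}^{m^2}\E[\phi(N_j)]\ -\ O(m).
\end{equation*}
Both bounds thus reduce the problem to showing that $m^{-1}\sum_{j}\E[\phi(N_j)]=\beta n^{1/2}+o(n^{1/2})$ for a good choice of $m=m_n$. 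Here I would invoke the Variance Condition. Since $\YY$ has uniform marginals, $\E[N_j]=n/m^2=:\mu$, and the Variance Condition gives $\Var[N_j]\le Cn$ with $C$ depending only on the law of $\YY$. Choosing $m_n\to\infty$ slowly enough that $m_n^4=o(n)$ (so also $1/m_n\le\alpha$ for large $n$), Chebyshev gives $\P(|N_j-\mu|>\epsilon\mu)\le Cm^4/(\epsilon^2 n)\to0$ uniformly in $j$. On this high-probability event $\phi(N_j)$ lies between $(\beta\pm\delta)\bigl((1\mp\epsilon)\mu\bigr)^{1/2}$ once $\mu$ is large, while the off-event contribution is handled by Cauchy--Schwarz via $\phi(N_j)\le c\,N_j^{1/2}$ and $\Var[N_j]\le Cn$; summing over the $m^2$ cells and dividing by $m$ (note $m\mu^{1/2}=n^{1/2}$) gives $\limsup_n n^{-1/2}\E[L(Y_1,\ldots,Y_n)]\le(\beta+\delta)(1+\epsilon)^{1/2}$ and $\liminf_n n^{-1/2}\E[L(Y_1,\ldots,Y_n)]\ge(\beta-\delta)(1-\epsilon)^{1/2}$. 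Sending $\epsilon,\delta\downarrow0$ completes the proof; by \eqref{eq:threeDistances} it is immaterial whether $L$ is read with the Euclidean, torus, or free-on-boundary path length.

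I expect the lower bound to be the main obstacle. The upper bound needs only $\E[N_j]=n/m^2$ and concavity of the square root, and is essentially the estimate underlying Karp's partitioning heuristic restricted to cells below scale $\alpha$. The lower bound, however, genuinely requires that the cell counts $N_j$ be \emph{balanced} --- all close to the common mean $n/m^2$ --- since otherwise $\sum_j\E[\phi(N_j)]$ can sit well below $m^2\cdot\beta(n/m^2)^{1/2}=\beta m\, n^{1/2}$; this is exactly what the Variance Condition supplies, and it is what forces the coupled constraint $m_n\to\infty$ with $m_n^4=o(n)$. One also has to be disciplined about carrying \emph{both} regularization parameters --- the concentration window $1\pm\epsilon$ and the slack $\delta$ coming from $\phi(k)/k^{1/2}\to\beta$ --- all the way through, sending them to their limits only after $n\to\infty$, so that the single BHH constant $\beta$ is what finally appears.
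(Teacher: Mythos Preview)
Your proposal is correct and follows essentially the same route as the paper: a grid decomposition at scale below $\alpha$, the Redmond--Yukich two-sided bound (your superadditivity of $L_B$ plus \eqref{eq:threeDistances} is exactly their lower bound, and your snake is their upper bound), replacement of the cell contents by the i.i.d.\ sample $\S_j$ at cost $O(M/m)$ per cell, and a Chebyshev estimate from the Variance Condition to pin each $\E[\phi(N_j)]$ near $\beta\mu^{1/2}$.

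One simplification is worth noting. You send $m_n\to\infty$ with $m_n^4=o(n)$, but the paper keeps the grid size \emph{fixed}: any single $k$ with $k^{-1}\le\alpha$ already gives
\[
-\,C_0k+\sum_{i=1}^{k^2}L(S(Q_i,n))\ \le\ L(Y_1,\ldots,Y_n)\ \le\ C_1k+\sum_{i=1}^{k^2}L(S(Q_i,n)),
\]
and for fixed $k$ one has $\mu=n/k^2\to\infty$, so the same Chebyshev argument yields $\E[\phi(N_j)]\sim\beta\mu^{1/2}$ for each cell and the $O(k)$ error terms are constants. Your claim that the Variance Condition ``forces the coupled constraint $m_n\to\infty$'' is therefore not quite right; nothing in the lower bound requires a growing grid, and fixing $m$ removes the need to balance $O(m)$ against the off-event estimate $O(m^2/\epsilon)$. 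Your argument with $m_n\to\infty$ is of course still valid, just a bit more work than necessary.
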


\begin{proof}
For any integer $k$ such that $k^{-1} \leq \alpha$ we consider the natural
decomposition of the flat torus into $k^2$ subsquares $Q_i$, $i=1,2, \ldots, k^2$,
of side length $k^{-1}$. We then introduce the sets
$$
S(Q_i, n) =\{Y_t: Y_t \in Q_i \text{ and } t \in [1 : n] \}
$$
and we let $L(S(Q_i,n))$ be the length of the shortest path through the points in $S(Q_i, n)$.
If we then stitch these $k^2$ optimal paths
together by considering the subsquares $Q_i$, $1 \leq i \leq k^2$,
in plowman's order --- down one row then back the next, then our
stitching cost is less than $3k$, but all we need from these considerations is that
there is a universal constant $C_1>0$ such that
one has the pointwise bound,
\begin{equation}\label{eq:pointwiseUpper}
L(Y_1, Y_2, \ldots, Y_n)
\leq
C_1 k + \sum_{i=1}^{k^2} L(S(Q_i,n)).
\end{equation}
More notably, one can also show that there is a universal constant $C_0>0$ for which one has
\begin{equation}\label{eq:pointwiseLower}
-C_0 k + \sum_{i=1}^{k^2} L(S(Q_i,n)) \leq L(Y_1, Y_2, \ldots, Y_n).
\end{equation}
This bound is due to \citeasnoun{RedYuk:AAP1994}, and it may be proved by noticing that
the sum of the values $L(S(Q_i,n))$ can be bounded
by the length of the optimum path through $Y_1, Y_2, \ldots, Y_n$
and the sum of lengths of the boundaries of the individual squares $Q_i$, $1 \leq i \leq k^2$.
For the details concerning \eqref{eq:pointwiseLower}, including analogous bounds for $[0,1]^d$, $d \geq 2$, one can consult
\citeasnoun[Chapter 3]{Yuk:SPRI1998}.

We now recall that
$$
S(Q_i, n) =\{Y_t: Y_t \in Q_i, \, t \in [1 : n] \} \quad \text{ and } \quad
N(Q_i, \YY[1:n]) = \sum_{t=1}^n \1(Y_t \in Q_i),
$$
and we estimate the value of $\E [L(S(Q_i,n))]$ as $n\rightarrow \infty$.
By the $(\alpha,M)$ local uniformity of $\YY$ and \eqref{eq:ConditionallyUniform},
there is a set $\S_{n} \subseteq S(Q_i,n)$ such that
\begin{equation}\label{eq:cardinality-bound-Sn}
N(Q_i, \YY[1:n]) - M  \leq |\S_n| \leq N(Q_i, \YY[1:n])
\end{equation}
and
$$
\S_{n} \stackrel{\rm dpp}{=} \{U_s(Q_i): 1 \leq s \leq |\S_n| \}.
$$
From the cardinality bounds \eqref{eq:cardinality-bound-Sn} and a crude path length comparison we then have
\begin{equation}\label{eq:lenghtinQ}
| L(\S_n) - L(S(Q_i,n))| \leq  2^{3/2} M k^{-1}.
\end{equation}
Now, given the pointwise bounds \eqref{eq:pointwiseUpper}, \eqref{eq:pointwiseLower} and \eqref{eq:lenghtinQ},
the lemma will follow once we show that
\begin{equation}\label{eq:BabyBHH}
\E[ L(\S_n) ]\sim \beta k^{-2} n^{1/2} \quad\quad \text{as } n \rightarrow \infty.
\end{equation}
If we now let $\ell(j)$ denote the expected length of the minimal path through an independent uniform sample of size $j$
in the unit square, then by scaling and by conditioning on the cardinality of $\S_n$
we have
\begin{align*}
\E[L(\S_n)]=k^{-1} \sum_{j=0}^n \ell(j) \P(|\S_n|=j).
\end{align*}
The BHH relation \eqref{eq:BHHlimit} also tells us that we have
$
\ell(j) =  \beta j^{1/2} + o( j^{1/2}),
$
so
\begin{align} \label{eq:EV-TSP-in-squares}
\E[L(\S_n)]
& = \beta k^{-1} \sum_{j=0}^n j^{1/2} \,\, \P(|\S_n|=j) + o (n^{1/2}) \\
& = \beta k^{-1} \E[|\S_n|^{1/2}] + o (n^{1/2}).\nonumber
\end{align}

Now one just needs to estimate $\E[|\S_n|^{1/2}]$.
Linearity of expectation and the fact that each $Y_t$ is uniformly distributed on $\T$ then combine to give us
\begin{equation}\label{eq:Nupper}
\E[ N(Q_i, \YY[1:n]) ] = k^{-2} n \quad \text{and} \quad  \E[ N(Q_i, \YY[1:n])^{1/2} ] \leq  k^{-1} n^{1/2},
\end{equation}
where the second bound comes from Jensen's inequality.
For any $0<\theta <1$, the Variance Condition \eqref{eq:VarCondition}
and Chebyshev's inequality also give us
$$
\P(N(Q_i, \YY[1:n])\geq \theta \, \E [N(Q_i, \YY[1:n])])=1-O(1/n),
$$
so we have the lower bound
\begin{equation}\label{eq:Nlower}
 \theta^{1/2} k^{-1} n^{1/2} (1-O(1/n) ) \leq \E [ N(Q_i, \YY[1:n])^{1/2}].
\end{equation}
By the inequalities \eqref{eq:Nupper} and \eqref{eq:Nlower}, we see from the arbitrariness of $\theta$ and the bound \eqref{eq:cardinality-bound-Sn}
that we have
\begin{equation*}\label{ex:root-upper-bound}
\E [ |\S_n|^{1/2} ]\sim k^{-1} n^{1/2} \quad \quad \text{as } n \rightarrow \infty.
\end{equation*}
Finally, from \eqref{eq:EV-TSP-in-squares} we see this gives us that $\E[L(\S_n)]\sim \beta k^{-2} n^{1/2}$ as $n \rightarrow \infty$,
so by the observation preceding \eqref{eq:BabyBHH} the proof of the lemma is complete.
\end{proof}

\section{Preservation of Local Uniformity}\label{se:LocalUnif-H-transformation}

A key feature of local uniformity is that it is preserved by a $H_{\epsilon, N}$ transformation.
Before proving this fact, it is useful to introduce some notation and
to make a geometric observation.  First, for
any Borel set $A \subseteq \T$ we consider two kinds of translations of $A$ by  an $\epsilon>0$; these are given by
$$
^\epsilon \! A = \{(x-\epsilon,y): (x,y)  \in A \} \quad \text{and} \quad A^\epsilon  = \{(x+\epsilon,y): (x,y)  \in A \},
$$
so the set $^\epsilon\! A$ is equal to the set $A$ translated to the \emph{left} in $\T$,
and  $A^\epsilon$ is equal to the set $A$
shifted to the \emph{right} in $\T$.

Next, we fix a Borel set $A \subseteq \T$ for which we have
$^\epsilon \!A \cap A =\emptyset.$ We then consider a sequence of independent random variables $Y_1, Y_2, \ldots, Y_n$ with the uniform
distribution on $^\epsilon \!A \cup A$ and a sequence  of independent random variables $Y_1', Y_2', \ldots, Y_n'$ with the uniform
distribution on $A$. In this situation, we then have an elementary distributional identity of point processes,
\begin{equation}\label{eq:ElemID}
\{ Y_t: Y_t \in {^\epsilon \! A},\,  t \in [1:n] \}^\epsilon \cup \{ Y_t: Y_t \in A, \,  t \in [1:n] \}
\stackrel{\rm dpp}{=}
\{ Y_t':  t \in [1:n] \},
\end{equation}
where one should note that in the first term of \eqref{eq:ElemID} we have used both a left shift on $A$ and a right shift on
the set of points that fall into the shifted set $^\epsilon \! A$. The identity \eqref{eq:ElemID} provides an essential
step in the proof of the next proposition.

\begin{proposition}[Local Uniformity and $H_{\epsilon, N}$ Transformations]\label{pr:local-unif}
If $\XX$ is an $(\alpha,M)$ locally uniform process
with $0< \alpha \leq 1$ and $M \in \N$ and if one has $0 < \epsilon < \alpha$ and $N < \infty$, then the process
$$
\widehat \XX = H_{\epsilon, N} (\XX)
$$
is $(\widehat{\alpha}, \widehat{M})$ locally uniform with
$$
0< \widehat{\alpha} < \min\{ \epsilon, \alpha - \epsilon\} \quad \text{ and } \quad
\widehat{M}=M + 4N.
$$
\end{proposition}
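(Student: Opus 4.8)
The plan is to verify the two defining conditions of an $(\widehat\alpha, \widehat M)$ locally uniform process directly for $\widehat\XX = H_{\epsilon,N}(\XX)$, working block-by-block. The key structural fact I would exploit is the one the paper has just isolated: within a single block $B_k$, the first half consists of the original points $X_{kN},\ldots,X_{(k+1)N-1}$ and the second half consists of their $\epsilon$-translates. So for a subsquare $\widehat Q \in \QQ(\widehat\alpha)$ and a Borel set $A \subseteq \widehat Q$, the points of $\widehat\XX$ landing in $A$ come from two sources: original points of $\XX$ that fall in $A$, and original points of $\XX$ that fall in the left-translate $^\epsilon\! A$ (since $X_t(\epsilon) \in A \iff X_t \in {}^\epsilon\! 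A$). The constraint $\widehat\alpha < \min\{\epsilon, \alpha-\epsilon\}$ is exactly what is needed to make this clean: $\widehat\alpha < \alpha - \epsilon$ guarantees that $A \cup {}^\epsilon\! A$ is contained in a subsquare of side length $< \alpha$ (so the local uniformity of $\XX$ applies to it), and $\widehat\alpha < \epsilon$ guarantees $A \cap {}^\epsilon\! A = \emptyset$, so the elementary point-process identity \eqref{eq:ElemID} is available to recombine the two pieces into a single uniform sample.

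\textbf{Variance Condition.} Given $a \le b$, I would count $N(A, \widehat\XX[a:b])$ by grouping the indices in $[a:b]$ according to which block $B_k$ they belong to and whether they sit in the first or second half of that block. After this bookkeeping, $N(A,\widehat\XX[a:b])$ is a sum, over the relevant original index set $J \subseteq \Z$ (with $|J| \le b-a+1$), of $\1(X_t \in A) + \1(X_t \in {}^\epsilon\! A)$ — with the understanding that, because of the overlap structure of which halves of which blocks the window $[a:b]$ touches, one might double-count or omit a bounded (by $O(N)$) number of terms at the two ends; those contribute a bounded additive error to the variance and can be absorbed. Since $A \cup {}^\epsilon\! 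A$ lies in a subsquare of side $< \alpha$, the Variance Condition for $\XX$ bounds $\Var[N(A \cup {}^\epsilon\! A, \XX[J])] \le C|J| \le C(b-a+1)$, and the end corrections change the constant $C$ by at most an $O(N)$ amount; taking the new constant to be, say, $C + $ (something times $N$) handles it. (One must double-check the end effects genuinely give a bounded variance contribution, not a bounded count; a crude bound $\Var \le \E[(\cdot)^2]$ on the discarded terms plus Cauchy–Schwarz cross terms suffices.)

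\textbf{Local Uniformity Condition.} This is the substantive step and where I expect the real work. Given $a \le b$ and $A \subseteq \widehat Q \in \QQ(\widehat\alpha)$, I would again decompose $[a:b]$ into the portions lying in complete first-halves of blocks, complete second-halves of blocks, and at most two incomplete "boundary" half-blocks at the ends; the incomplete pieces involve at most $2N$ original time indices each, hence at most $4N$ points of $\widehat\XX$ total — this is the source of the $+4N$ in $\widehat M$. On the "good" part — a union of complete half-blocks — the collection of $\widehat\XX$-points in $A$ is, by the block structure, exactly $\{X_t : X_t \in A\} \cup \{X_t : X_t \in {}^\epsilon\! A\}^\epsilon$ for $t$ ranging over a fixed original index set $J$ (the right-shift $(\cdot)^\epsilon$ undoing the $\epsilon$-translation that put the second-half points into $A$). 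Apply the Local Uniformity Condition of $\XX$ to the set $A \cup {}^\epsilon\! A$ and the index set $J$: it produces a random subset $\S' \subseteq \{X_t \in A \cup {}^\epsilon\! A : t \in J\}$ with $\S' \stackrel{\rm dpp}{=} \{U_s(A \cup {}^\epsilon\! A)\}_{s \le |\S'|}$ and missing at most $M$ points. Now split $\S'$ into its part in $A$ and its part in ${}^\epsilon\! A$, right-shift the latter, and invoke the identity \eqref{eq:ElemID} (valid since $A \cap {}^\epsilon\! A = \emptyset$): the result is distributed as a uniform sample from $A$ of the appropriate random size. Discarding the $\le 4N$ boundary points and the $\le M$ points lost from $\S'$, I get a random set $\S \subseteq \{\widehat X_t \in A : t \in [a:b]\}$ with $N(A,\widehat\XX[a:b]) - (M+4N) \le |\S| \le N(A,\widehat\XX[a:b])$ and $\S \stackrel{\rm dpp}{=} \{U_s(A)\}_{s \le |\S|}$, as required.

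\textbf{The main obstacle} is getting \eqref{eq:ElemID} applied correctly in the presence of the random, data-dependent cardinality $|\S'|$ coming out of $\XX$'s local uniformity, rather than a fixed $n$: one needs that conditioning on $|\S'| = j$, the points of $\S'$ are a uniform $j$-sample from $A \cup {}^\epsilon\! A$, and then \eqref{eq:ElemID} (which is stated for a fixed sample size) applies conditionally; one then uncondition. A secondary nuisance is the careful accounting of the end half-blocks so that the count of discarded points is genuinely $\le 4N$ and the leftover set $\S$ is honestly a subset of the $\widehat X_t$'s that land in $A$ — this is bookkeeping, but it has to be laid out carefully because the window $[a:b]$ interacts with the period-$2N$ block structure in a way that is notationally heavy. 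None of the individual steps is deep; the challenge is organizing the reduction to $\XX$ cleanly.
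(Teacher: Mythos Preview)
Your plan is essentially the paper's proof: discard the incomplete boundary pieces (total size $\le 4N$), observe that on the complete-block part the $\widehat\XX$-points in $A$ correspond to the $\XX$-points in $A \cup {}^\epsilon\! A$ over a single original index interval, apply the local uniformity of $\XX$ to that set and interval, and then shift the ${}^\epsilon\! A$ part back via identity \eqref{eq:ElemID}. One correction: your claim of a single ``fixed original index set $J$'' is only valid if you restrict to complete \emph{whole} blocks rather than half-blocks, since otherwise the first-half and second-half original index intervals $J_1, J_2$ can differ by up to $N$ on each end; the paper works with whole blocks for exactly this reason, and taking $J = J_1 \cap J_2$ (equivalently, the whole-block range) still keeps the total discard at $\le 4N$, so your $\widehat M = M + 4N$ is right.
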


\begin{proof}
The definition of $H_{\epsilon, N}$  tells us
that for each $a\leq b$ we can write $\widehat \XX[a:b]$ in the form
\begin{equation}\label{eq:blockDecomp}
\widehat \XX[a:b]=(L,B_{\ell},B_{\ell+1}, \ldots, B_{r}, R).
\end{equation}
where the lengths of the segments $L$ and $R$ are between $0$ and $2N-1$ and where
each segment $B_k$,  $k \in [\ell: r]$,  is a ``complete block'' of $\widehat \XX$  that has the form
\begin{equation}\label{eq:blocki2}
B_k=( X_{k N}, X_{k N+1}, \ldots, X_{(k+1)N-1}, X_{k N}(\epsilon), X_{k N+1}(\epsilon), \ldots, X_{(k+1)N-1}(\epsilon) ).
\end{equation}
Here, in the case that $\widehat \XX[a:b]$ does not contain a
complete block, we simply write $\widehat \XX[a:b]=(L, R)$ for any choices of $L$ and $R$ that satisfy
the length constraints. In this case we (crudely) note that $|b-a+1| < 4N$.

If the set of complete blocks in the decomposition \eqref{eq:blockDecomp} is not empty, then we can take
\begin{equation}\label{eq:blockDecomp2}
\widehat \XX[a':b']=(B_\ell,B_{\ell + 1}, \ldots, B_{r}),
\end{equation}
to be the maximal subsegment of $\widehat \XX[a:b]$ that contains only complete blocks.
In this case, we have
$$
a' = \min\{2 k N: 2 k N \geq a\} \quad \text{and} \quad b'=\max\{2 k N -1 : 2 k N - 1\leq b\},
$$
but we will not make use of these explicit formulas beyond noting that we have the bounds
$0 \leq |b-a+1|-|b'-a'+1| \leq 4N$.

For each $k \in [\ell:r]$ we let $B_k'$ be the first half of the complete block $B_k$
described in \eqref{eq:blocki2}; that is, we set
$$
B_k'=( X_{k N}, X_{k N+1}, \ldots, X_{(k+1)N-1}).
$$
We then concatenate these half blocks to obtain a segment of the original $\XX$ process;
specifically we obtain a segment of $\XX$ that
can be written as
$$
\XX[a'':b'']=(B_\ell', B_{\ell+1}', \ldots, B_r').
$$
Here one should note that the
length of the segment $\XX[a'':b'']$ is exactly half of the length of the segment $\widehat \XX[a':b']$.
We will use the correspondence between $[a:b]$, $[a':b']$, and $[a'':b'']$ throughout the remainder of this proof.


\begin{figure}[t]
    \caption{Local Uniformity and $H_{\epsilon,N}$ Transformations}\label{fig:translations}
    \begin{tikzpicture}

    \draw[thick] (0,0) -- (6,0) -- (6,6) node[anchor=north east] {$\boldsymbol{Q'}$} -- (0,6) -- (0,0);

    \draw  [thick, decoration={brace,mirror,raise=0.125cm},decorate]
     (0,0) -- (6,0) node [pos=0.5,anchor=north,yshift=-0.25cm] {$\alpha$};

    \draw[] (0.25,1.75) -- (2.5,1.75) -- (2.5,4) node[anchor=north east] {${^\epsilon Q}$} -- (0.25,4) -- (0.25,1.75);

    \node[cloud, cloud puffs=4.9, cloud ignores aspect, minimum width=1cm, minimum height=1cm, align=center, draw, red, thick] (cloud) at (1.0, 2.5) {$^\epsilon \! A$};


    \draw  [decoration={brace,mirror,raise=0.125cm},decorate]
     (0.27,1.75) -- (2.48,1.75) node [pos=0.5,anchor=north,yshift=-0.25cm] {$\widehat \alpha$};


    \draw  [decoration={brace,mirror,raise=0.125cm},decorate]
     (0.27,1.0) -- (3.23,1.0) node [pos=.5,anchor=north,yshift=-0.25cm] {$\epsilon$};

    \draw (3.25,1.75) -- (5.5,1.75) -- (5.5,4) node[anchor=north east] {$Q$} -- (3.25,4) -- (3.25,1.75) ;
    \node[cloud, cloud puffs=4.9, cloud ignores aspect, minimum width=1cm, minimum height=1cm, align=center, draw, red, thick] (cloud) at (4.0, 2.5) {$A$};


    \draw  [decoration={brace,mirror,raise=0.125cm},decorate]
     (3.27,1.75) -- (5.48,1.75) node [pos=0.5,anchor=north,yshift=-0.25cm] {$\widehat \alpha$};

    \end{tikzpicture}
    \footnotetext{The figure illustrates the relations \eqref{eq:TwoConditionsOnA}.
    The size constraint on $\widehat \alpha$  guarantees the existence of a subsquare $Q'$ of side length $\alpha$
    that contains both a given $Q \in \QQ(\widehat \alpha)$ and its left translate $^\epsilon Q$. Similarly, the size constraint
    on $\epsilon$ guarantees that $Q$ and $^\epsilon Q$ are disjoint, so the Borel sets
    $A \subseteq Q$ and $^\epsilon \! A \subseteq  {^\epsilon Q}$
    are also disjoint.}
\end{figure}
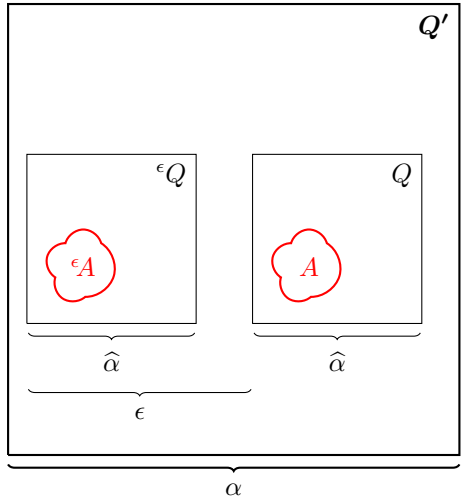


To confirm the
local uniformity of $\widehat \XX$, we need to check the two conditions \eqref{eq:VarCondition} and \eqref{eq:ConditionallyUniform}.
To check \eqref{eq:VarCondition}, we first
fix a Borel set $A \subseteq Q \in \QQ(\widehat{\alpha})$, and we note that
our assumptions $0< \widehat{\alpha} < \min\{ \epsilon, \alpha - \epsilon\}$ and
$0 < \epsilon < \alpha$ give us the relations
\begin{equation}\label{eq:TwoConditionsOnA}
^\epsilon \!A \cap A = \emptyset \quad \text{and} \quad ^\epsilon \! A \cup A \subseteq Q' \quad \text{for a } Q' \in \QQ(\alpha),
\end{equation}
which are illustrated in Figure \ref{fig:translations} on page \pageref{fig:translations}.

Next, we consider an integer pair $a<b$, and we assume for the moment that $\widehat \XX[a:b]$ contains at least one complete block.
By our decomposition \eqref{eq:blockDecomp} of $\widehat \XX[a:b]$ we have
$0\leq |b - a + 1 |-|b' - a' + 1| \leq 4N$, so there is a random variable $W(A, \widehat \XX[a:b])$
such that $0 \leq W(A,\widehat \XX[a:b]) \leq 4N$ and
\begin{align}\label{eq:counts-fromX-toXhat}
N(A, \widehat \XX[a,b]) &= N(A, \widehat \XX[a':b']) + W(A,\widehat \XX[a:b]) \\
&=N (^\epsilon \!A \cup A , \XX[a'':b'']) + W(A,\widehat \XX[a:b]), \nonumber
\end{align}
where in the second equality we just used the definition of $\widehat \XX$ and the definition of the set of indices $[a'':b'']$.

When we compute variances in \eqref{eq:counts-fromX-toXhat}, we get
\begin{align}\label{eq:VarConConf}
\Var[ N(A, \widehat \XX[a:b]) ] &\leq 2 \Var [ N (^\epsilon \!A \cup A , \XX[a'':b'']) ]  + 2 (4N)^2  \\
& \leq 2 C |b''-a''+1| +  32N^2 \leq C |b-a+1| + 32N^2, \notag
\end{align}
where in the second inequality we used the variance condition \eqref{eq:VarCondition}  for $\XX$
and the bounds $2|b'' - a'' +1 | = |b' - a' + 1| \leq |b - a + 1|$.
By \eqref{eq:VarConConf} we see that $\widehat \XX$ satisfies the variance condition \eqref{eq:VarCondition},
and one can take $\widehat C= C + 32N^2$ as a generous choice for the required constant.
Finally, we note that in case $\widehat \XX[a:b]$ does not contain a complete block, then we have $|b-a+1|< 4N$. In this case, it is trivial that
one has the variance condition \eqref{eq:VarCondition} for $\widehat \XX[a:b]$ when one takes $\widehat C$ for the
required constant.

To check the local uniformity condition \eqref{eq:ConditionallyUniform} for $\widehat \XX$, we
fix $a \leq b$ and $A$ as before so we continue to have the relations \eqref{eq:TwoConditionsOnA}.
In the case when $\widehat \XX[a:b]$ does not contain a complete block, then $|b-a+1|< 4N$ and we can simply take $\S$
required by \eqref{eq:ConditionallyUniform}
to be the empty set. In this case the conditions required by  \eqref{eq:ConditionallyUniform}
all hold trivially. Thus,
without loss of generality, we assume that
$\widehat \XX[a:b]$ contains at least one complete block.

By our hypothesis,  we have condition \eqref{eq:ConditionallyUniform} for the process $\XX$,
and in this case we can apply \eqref{eq:ConditionallyUniform} to the Borel set $^\epsilon \! A \cup A   \subseteq Q \in \QQ(\alpha)$
and the interval $[a'':b'']$. We are then guaranteed the existence of a set
\begin{equation}\label{eq:S-inclusion}
\S \subseteq \{ X_t:  X_t \in {^\epsilon\!A} \cup A \text{ and } t \in [a'':b'']  \}
\end{equation}
for which we have both the cardinality bounds
\begin{equation}\label{eq:cardinality-S}
 N (^\epsilon \!A \cup A , \XX[a'':b'']) - M \leq | \S | \leq  N (^\epsilon \!A \cup A , \XX[a'':b''])
\end{equation}
and the distributional identity
$
\S \stackrel{\rm dpp}{=} \{U_s(^\epsilon \!A \cup A ): 1 \leq s \leq |\S| \}.
$
Now it only remains to check that if we take
$$
\widehat \S = \{\S \cap \,  ^\epsilon\!\!A \}^\epsilon \cup \{\S \cap A \} \subseteq A
$$
then the set $\widehat \S$ will suffice to confirm that we have the
local uniformity condition \eqref{eq:ConditionallyUniform} for $\widehat \XX$.

First, we note from the  inclusion \eqref{eq:S-inclusion} that we have
\begin{align}\label{eq:Shat-inclusion}
\widehat \S
& \subseteq \{ X_t:  X_t \in {^\epsilon\!A} \text{ and } t \in [a'':b'']  \}^\epsilon \cup \{ X_t:  X_t \in A \text{ and } t \in [a'':b'']  \} \\
& =  \{ \widehat X_t:  \widehat X_t \in A \text{ and } t \in [a':b']  \} \subseteq \{ \widehat X_t:  \widehat X_t \in A \text{ and } t \in [a:b]  \}, \nonumber
\end{align}
where, for the middle equality we used the fact that $\widehat \XX[a':b']$
contains only complete blocks and the fact that  $\XX[a'':b'']$
contains only the unshifted halves of the same blocks.
The last inclusion follows simply because $a \leq a' < b' \leq b$.

Now, we already know from \eqref{eq:ElemID} that the point process $\widehat{\S}$
has the same distribution as an independent sample of $|\widehat{\S}|$
points with the uniform distribution on $A$, so to complete the proof of the lemma, we just need to check the double bound
\begin{equation}\label{eq:doublebound}
N(A, \widehat \XX[a,b]) - M - 4N \leq | \widehat \S | \leq N(A, \widehat \XX[a,b]).
\end{equation}
The upper bound follows from the last inclusion of \eqref{eq:Shat-inclusion}.
For the lower bound, we start with the decomposition \eqref{eq:counts-fromX-toXhat}. The uniform bound $W(A,\widehat \XX[a:b]) \leq 4N$
and the left inequality of \eqref{eq:cardinality-S} then give us
$$
N(A, \widehat \XX[a,b]) - M - 4N \leq |\S|=| \widehat \S |,
$$
and this lower bound completes the check of \eqref{eq:doublebound}.
\end{proof}

Proposition \ref{pr:local-unif} takes us much closer to
the proof of Theorems \ref{th:noBHHThm} and \ref{th:noBHHThmExpectations}. In particular, it is now
easy to show that $T_{\epsilon, N}$ also preserves local uniformity.  We establish this as a corollary to a
slightly more general lemma.

\begin{lemma}[Independent Random Shift of Index]\label{lm:YTransformation}
Let $\YY$ be an $(\alpha, M)$ locally uniform process with $0 < \alpha \leq 1$ and $M \in \N$.
If the random variable $I$ has the
uniform distribution on $\{0,1,\ldots, K-1\}$ and if $I$ is independent of $\YY$, then
the process $\YY'=(Y'_t: t \in \Z)$ defined by
setting
$$
Y_t'=Y_{t+I} \quad \text{for all } t \in \Z
$$
is an $(\alpha, M+K)$ locally uniform process.
\end{lemma}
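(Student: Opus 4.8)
The plan is to verify the three defining requirements for $\YY'$ to be $(\alpha, M+K)$ locally uniform — uniform marginals, the Variance Condition \eqref{eq:VarCondition}, and the Local Uniformity Condition \eqref{eq:ConditionallyUniform} — by conditioning on the value of $I$ and transferring the corresponding property of $\YY$ through the elementary observation that on the event $\{I=i\}$ one has $\YY'[a:b]=\YY[a+i:b+i]$. The marginals are immediate: conditioning on $I$ exhibits the law of $Y'_t=Y_{t+I}$ as a mixture of the uniform laws of the $Y_{t+i}$, $0\le i\le K-1$, so $Y'_t$ is again uniform on $\T$.

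For the Variance Condition I would fix integers $a\le b$ and a Borel set $A\subseteq Q\in\QQ(\alpha)$ and note that $N(A,\YY'[a:b])=N(A,\YY[a+I:b+I])$. Every window $[a+i:b+i]$ has the same length $b-a+1$, so the Variance Condition for $\YY$ bounds the conditional variance of $N(A,\YY[a+I:b+I])$ given $I=i$ by $C(b-a+1)$, uniformly in $i$; and since each $Y_t$ is uniform on $\T$, the conditional mean equals $(b-a+1)\,\lambda(A)$ for every $i$ and hence is constant in $i$. The law of total variance then gives $\Var[N(A,\YY'[a:b])]\le C(b-a+1)$, so one may take $\widehat C=C$.

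The Local Uniformity Condition is the only step with a real twist, since the $\YY$-window $[a+I:b+I]$ that corresponds to $[a:b]$ for $\YY'$ is itself random, and I want to avoid having to argue that a mixture over $i$ of point processes of the form ``$N_i$ independent uniform points with $N_i$ random'' is again of that form. The device I would use is to retreat to the \emph{core} interval $[a+K-1:b]$, namely the intersection $\bigcap_{i=0}^{K-1}[a+i:b+i]$, which is a fixed (non-random) interval contained in every window $[a+I:b+I]$. If $b-a+1\le K$ this core is empty; then taking $\S=\emptyset$ works, because $N(A,\YY'[a:b])\le b-a+1\le K\le M+K$ forces the required cardinality bounds and the distributional identity is vacuous. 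Otherwise, apply \eqref{eq:ConditionallyUniform} for $\YY$ to $A$ and to the fixed interval $[a+K-1:b]$ to get a random set $\S\subseteq\{Y_t:Y_t\in A,\ t\in[a+K-1:b]\}$ with $N(A,\YY[a+K-1:b])-M\le|\S|\le N(A,\YY[a+K-1:b])$ and $\S\stackrel{\rm dpp}{=}\{U_s(A):1\le s\le|\S|\}$. Since $\S$ is constructed from $\YY$ and the auxiliary uniforms alone, the distributional identity survives the relabeling unchanged; since $[a+K-1:b]\subseteq[a+I:b+I]$, on $\{I=i\}$ the set $\S$ sits inside $\{Y'_t:Y'_t\in A,\ t\in[a:b]\}$; and since $[a+I:b+I]$ differs from $[a+K-1:b]$ in at most $K-1$ indices, $N(A,\YY[a+K-1:b])\le N(A,\YY'[a:b])\le N(A,\YY[a+K-1:b])+(K-1)$. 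Chaining these estimates with the cardinality bound for $\S$ gives $N(A,\YY'[a:b])-(M+K)\le|\S|\le N(A,\YY'[a:b])$, which is exactly \eqref{eq:ConditionallyUniform} with exception parameter $M+K$.

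I expect the main obstacle to be precisely this last point: keeping the point-process identity clean while the observation window is shifted by a random amount. Passing to the deterministic core interval $[a+K-1:b]$ resolves it, at the cost of enlarging the exception parameter by $K$ — of which $K-1$ accounts for the ragged ends of the random window $[a+I:b+I]$ and the extra unit is harmless slack — and this is why the lemma is stated with $M+K$ rather than with $M$.
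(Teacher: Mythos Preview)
Your proof is correct, and its central device --- retreating to a deterministic ``core'' interval contained in every shifted window $[a+I:b+I]$ and applying \eqref{eq:ConditionallyUniform} there --- is exactly the paper's approach; the paper happens to use $[a+K:b]$ in place of your slightly tighter $[a+K-1:b]$, an immaterial difference. One genuine improvement is your handling of the Variance Condition: the paper writes $N(A,\YY'[a:b])=N(A,\YY[a+K:b])+W$ with $0\le W\le K$ and crudely bounds the variance by $2\Var[N(A,\YY[a+K:b])]+2K^2$, landing on the constant $2C+2K^2$, whereas your law-of-total-variance argument exploits the fact that the conditional mean $\E[N(A,\YY[a+i:b+i])]=(b-a+1)\lambda(A)$ is constant in $i$ to kill the between-variance term and recover the original constant $C$.
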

\begin{proof}
If $ b < a + K $, then the result is trivial since one can simply take $\S=\emptyset$.
On the other hand, if $ a + K \leq b$  then $0 \leq I < K$, and  we have
$$
a \leq a + I < a + K \leq b \leq b+I < b+K.
$$
Thus, for any $A  \subseteq Q \in \QQ(\alpha)$ we have
$$
N(A, \YY[a+K:b]) \leq N(A, \YY'[a:b]) \leq N(A, \YY[a:b+K]),
$$
and we can write
\begin{equation}\label{eq:N-counts-randomized}
N(A, \YY'[a:b]) =  N(A, \YY[a+K:b]) + W( A, \YY'[a:b] ),
\end{equation}
where $W( A, \YY'[a:b] )$ is a random variable such that $0 \leq W( A, \YY'[a:b] ) \leq K$.
Taking the variance in \eqref{eq:N-counts-randomized} then gives
\begin{align*}
\Var [ N(A, \YY'[a:b])]   & \leq 2 \Var [ N(A, \YY[a + K:b]) ] + 2 \Var [  W( A, \YY'[a:b] ) ]\\
                        & \leq 2 C |b - a - K + 1| + 2 K^2
                         \leq (2 C  + 2 K^2)|b-a+1|,
\end{align*}
where in the second inequality we used the variance condition \eqref{eq:VarCondition} for the process $\YY$ together with
the almost sure bound on $W( A, \YY'[a:b] )$.
The last inequality confirms the  variance condition \eqref{eq:VarCondition} for the  process $\YY'$.

To check the local uniformity condition  \eqref{eq:ConditionallyUniform}
for $\YY'$, we take integers $a\leq b$, and we fix
a Borel set $A \subseteq Q \in \QQ(\alpha).$
By the $(\alpha, M)$ local uniformity of $\YY$
we know that there is a set
$
\S \subseteq \{Y_t: Y_t \in A \text{ and } t \in [a+K:b] \}
$
such that
\begin{equation}\label{eq:S-cardinality-RandomShift}
N(A, \YY[a + K:b]) - M  \leq | \S | \leq N(A, \YY[a + K:b])
\end{equation}
and
$\S \stackrel{\rm dpp}{=} \{U_s(A): 1 \leq s \leq |\S| \}$.

We now just need to check we have the local uniformity condition \eqref{eq:ConditionallyUniform}
for $\YY'$ if we take $\S'=\S$. With this choice, the bounds $0\leq I < K$ give us the inclusion
\begin{align*}
\S' & \subseteq \{Y_t: Y_t \in A \text{ and } t \in [a+K:b] \} \\
    & \subseteq \{Y_t: Y_t \in A \text{ and } t \in [a+I:b+I] \}
    = \{Y'_t: Y'_t \in A \text{ and } t \in [a:b] \}.
\end{align*}
The set $\S'$ inherits the distributional requirement of \eqref{eq:ConditionallyUniform} from $\S$, so we only need
to check its cardinality requirements.
From  \eqref{eq:N-counts-randomized} and \eqref{eq:S-cardinality-RandomShift}
we get the upper bound
$$
|\S' | = |\S| \leq  N(A, \YY[a + K:b]) \leq  N(A, \YY'[a:b]),
$$
as well as the lower bound
$$
 N(A, \YY'[a:b]) - K - M \leq N(A, \YY[a + K:b])  - M \leq |\S | = |\S'|,
$$
so the cardinality requirements of \eqref{eq:ConditionallyUniform} also hold.
\end{proof}

A $T_{\epsilon, N}$ transformation is just a $H_{\epsilon, N}$ transformation followed by an index shift by an independent
random variable $I$ that has the uniform distribution on the set of indices $[0:2N-1]$.
Proposition \ref{pr:local-unif} and Lemma \ref{lm:YTransformation} tell us that each of these actions
preserves local uniformity. These observations
give us the final lemma of the section, where, for simplicity, we take a generous
value for the size $\widetilde{M}$ of the exception set.

\begin{lemma}[Local Uniformity and $T_{\epsilon, N}$ Transformations]\label{lm:local-unif-T-transformation}
If $\XX$ is an $(\alpha,M)$ locally uniform process
for some $0< \alpha \leq 1$ and $M \in \N$, then for each $0 < \epsilon < \alpha$ and $N < \infty$ the process
$$
\widetilde \XX = T_{\epsilon, N} (\XX),
$$
is an $(\widetilde{\alpha}, \widetilde{M})$ locally uniform process where
$$
0 < \widetilde{\alpha} < \min\{ \epsilon, \alpha - \epsilon\} \quad \text{ and } \quad \widetilde{M}= M +6N.
$$

\end{lemma}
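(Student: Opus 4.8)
The plan is to read off the lemma from the two‑step structure of the $T_{\epsilon,N}$ transformation together with the two preservation results already in hand. Recall that $\widetilde\XX = T_{\epsilon,N}(\XX)$ is obtained by first forming the hat process $\widehat\XX = H_{\epsilon,N}(\XX)$ and then replacing it by the index‑shifted process $\widehat X_{t+I}$, where $I$ is uniform on $[0:2N-1]$ and independent of $\widehat\XX$. Since both of these operations are already known to preserve local uniformity, the lemma reduces to composing the two statements and tracking the parameters; there is no genuine obstacle beyond checking that the hypotheses of the cited results hold and that the bookkeeping closes.

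Concretely, I would first apply Proposition \ref{pr:local-unif} to $\XX$ with the given $\alpha$, $M$, $\epsilon$, $N$. The hypothesis $0 < \epsilon < \alpha$ is exactly what that proposition requires, and it tells us that $\widehat\XX = H_{\epsilon,N}(\XX)$ is $(\widehat\alpha,\widehat M)$ locally uniform for any $\widehat\alpha$ with $0 < \widehat\alpha < \min\{\epsilon,\alpha-\epsilon\}$ and with $\widehat M = M + 4N$. Next I would apply Lemma \ref{lm:YTransformation} to the process $\YY = \widehat\XX$ with $K = 2N$, which is precisely the shift length appearing in the definition of $T_{\epsilon,N}$; here the only hypothesis needed is $K = 2N < \infty$, which is immediate. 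The conclusion is that $\widetilde\XX$ is $(\widehat\alpha,\widehat M + 2N)$ locally uniform, and in particular it has uniform marginals, so all the structural requirements of Definition \ref{def:locally-uniform-process} are automatically in place.

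Finally I would reconcile the constants: $\widehat M + 2N = (M+4N) + 2N = M + 6N = \widetilde M$, which is the asserted exception‑set size, and the scale $\widehat\alpha$ already satisfies $0 < \widehat\alpha < \min\{\epsilon,\alpha-\epsilon\}$, so we may simply take $\widetilde\alpha = \widehat\alpha$ and the stated constraint $0 < \widetilde\alpha < \min\{\epsilon,\alpha-\epsilon\}$ holds. The deliberately generous value $\widetilde M = M + 6N$ is chosen so that this crude addition of exception sizes suffices and no sharper accounting is needed, which is the point of the "for simplicity" remark preceding the lemma.
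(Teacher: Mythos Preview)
Your proposal is correct and is exactly the argument the paper gives: it states explicitly in the paragraph preceding the lemma that a $T_{\epsilon,N}$ transformation is an $H_{\epsilon,N}$ transformation followed by an independent index shift by $I$ uniform on $[0:2N-1]$, and that Proposition~\ref{pr:local-unif} and Lemma~\ref{lm:YTransformation} handle the two steps. Your parameter bookkeeping, $(M+4N)+2N = M+6N$ with the scale constraint $0<\widetilde\alpha<\min\{\epsilon,\alpha-\epsilon\}$ inherited unchanged from the $H_{\epsilon,N}$ step, is precisely what the paper intends by its ``generous'' choice of $\widetilde M$.
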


\section{Iterated $T_{\epsilon, N}$ Transformations and a Limit Process}\label{se:IteratedTransformations}

We now consider the construction of a process $\XX^* =( X^*_t: t \in \Z)$
that we obtain as a limit of iterated $T_{\epsilon, N}$ transformations.
First we fix a sequence of integers
$1\leq N_1 < N_2 < \cdots $ and a sequence of real numbers that we write as
$1 > \epsilon_1 > \epsilon_2 > \cdots >0$.
Next, we let $\XX^{(0)}=(X^{(0)}_t: t \in \Z)$ be the doubly infinite sequence of independent random variables with the
uniform distribution on $\T$,  and we consider the
infinite sequence of stationary processes $\XX^{(1)}, \XX^{(2)}, \XX^{(3)},  \ldots$
that one obtains by successive applications of appropriate $T_{\epsilon, N}$ transformations:
\begin{equation}\label{eq:GrandRecursion}
\XX^{(1)} = T_{\epsilon_1, N_1} ( \XX^{(0)} ), \quad
\XX^{(2)} = T_{\epsilon_2, N_2} ( \XX^{(1)} ), \quad
\XX^{(3)} = T_{\epsilon_3, N_3} ( \XX^{(2)} ), \quad
\cdots.
\end{equation}

We now let $\T^\infty$ be the set of doubly infinite sequences $\x=(\ldots, x_{-1}, x_0, x_1, \ldots)$
where $x_t \in \T$ for each $t \in \Z$,
and we view $\T^\infty$ as a topological space with respect to the product topology.
By $\BB(\T^\infty)$ we denote the $\sigma$-field of Borel sets of $\T^\infty$,
and we let $\BB(\T^{[-m:m]})$ denote the smallest sub-$\sigma$-field of
$\BB(\T^\infty)$ such that each continuous function $f:\T^\infty \mapsto \R$ of the form
$\x \mapsto f(x_{-m}, \ldots, x_{-1}, x_0, x_1, \ldots,  x_m)$ is $\BB(\T^{[-m:m]})$ measurable.
In less formal language,  $\BB(\T^{[-m:m]})$ is the subset of elements of $\BB(\T^\infty)$
that do not depend on $x_t$ for $|t| > m$.

Next, we take  $\M$ to be the set of Borel probability measures on $\T^{\infty}$,
and we note that $\M$ is a complete metric space
if one defines the distance $\rho(\mu, \mu')$
between the Borel measures $\mu$ and $\mu'$ by setting
\begin{equation}\label{eq:DefRho}
\rho(\mu, \mu')= \sum_{m=1}^\infty 2^{-m} \sup \{ \, |\mu(\A) -\mu'(\A)|: \, \A \in \BB(\T^{[-m:m]}) \, \}.
\end{equation}
To show that the
sequence $\XX^{(1)}, \XX^{(2)}, \XX^{(3)},  \ldots$
converges in distribution to a process  $\XX^{*}$,  it suffices to show that
if we define the measures $\mu_1, \mu_2,  \ldots$ on $\BB(\T^\infty)$ by setting
\begin{equation}\label{eq:DefMuJ}
\mu_j(\A) = P (\XX^{(j)} \in \A), \quad \quad \text{for each } \A \in \BB(\T^\infty),
\end{equation}
then the sequence  $\mu_1, \mu_2,  \ldots$
is a Cauchy sequence under the metric $\rho$.
Fortunately, the Cauchy criterion can be verified under a mild condition on
the defining sequence of integers $N_1, N_2, \ldots $.

\begin{lemma}[A Condition for Convergence]\label{lm:GeneralConvergence}
If the processes $\XX^{(1)}, \XX^{(2)}, \XX^{(3)},  \ldots$  are defined by the
iterated $T_{\epsilon, N}$ transformations \eqref{eq:GrandRecursion} and if
\begin{equation*}\label{eq:ConvergenceConditionOnN}
\sum_{j=1}^\infty \frac{1}{N_j} < \infty,
\end{equation*}
then the sequence of processes $\XX^{(1)}, \XX^{(2)}, \XX^{(3)},  \ldots$
converges in distribution to a stationary,  translation invariant
process $\XX^*= (\ldots, X^*_{-1}, X^*_{0}, X^*_{1}, \ldots )$ such that $X^*_{1}$ has the uniform distributed on
$\T$.
\end{lemma}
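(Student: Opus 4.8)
The plan is to show that $(\mu_j)_{j\ge1}$ is Cauchy in $(\M,\rho)$, invoke completeness of $\M$ to obtain a limit measure $\mu^*$, and then check that the properties claimed for $\XX^*$ pass to the limit. The heart is the Cauchy estimate, and the main tool is Lemma~\ref{lm:CloseInDistribution}: a single $T_{\epsilon,N}$ transformation perturbs the law of any length-$(m+1)$ window by at most $m/N$. Thus for $k<l$, writing $\XX^{(l)}$ as the result of applying $T_{\epsilon_{k+1},N_{k+1}},\dots,T_{\epsilon_l,N_l}$ in succession to $\XX^{(k)}$, a telescoping argument gives, for every $\A\in\BB(\T^{[-m:m]})$,
\begin{equation*}
|\mu_l(\A)-\mu_k(\A)| \;\le\; \sum_{j=k+1}^{l} \frac{2m}{N_j},
\end{equation*}
since the relevant window has length $2m+1$ (here one uses that the $T$ transformations are applied to processes that are stationary and translation invariant, which is maintained along the recursion by Lemmas~\ref{lm:UniformPreservation}, \ref{lm:translation-invariance}, so that Lemma~\ref{lm:CloseInDistribution} applies at each stage). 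Summing the geometric weights $2^{-m}$ in the definition \eqref{eq:DefRho} of $\rho$ and using $\sum_m 2^{-m}(2m)<\infty$, one gets $\rho(\mu_k,\mu_l)\le C\sum_{j>k}1/N_j$ for an absolute constant $C$, which tends to $0$ as $k\to\infty$ by the hypothesis $\sum_j 1/N_j<\infty$. Hence $(\mu_j)$ is Cauchy and converges to some $\mu^*\in\M$; we let $\XX^*$ be a process with law $\mu^*$.

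Next I would verify the three asserted properties of $\XX^*$. Each of stationarity, translation invariance, and uniformity of $X^*_1$ is a property defined by equalities among finite-dimensional distributions (or, for uniformity, by the one-dimensional marginal), and each holds for every $\XX^{(j)}$ — stationarity and uniform marginals by Lemmas~\ref{lm:UniformPreservation} and \ref{lm:translation-invariance} applied inductively starting from the i.i.d.\ uniform process $\XX^{(0)}$, and translation invariance likewise since $\XX^{(0)}$ is translation invariant and $T_{\epsilon,N}$ preserves this by Lemma~\ref{lm:translation-invariance}. Because $\rho$-convergence implies convergence of all finite-dimensional distributions (indeed convergence on each $\BB(\T^{[-m:m]})$ forces weak convergence of the law of any finite window), and because the classes of stationary, translation-invariant, and uniform-marginal laws are each closed under this mode of convergence, all three properties are inherited by $\mu^*$. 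Concretely: for stationarity one passes to the limit in $\mu_j(\A)=\mu_j(\theta^{-1}\A)$ for cylinder sets $\A$; for translation invariance one passes to the limit in $\mu_j(\A)=\mu_j(\A(\delta))$; and for the marginal one notes $X^{(j)}_1$ is uniform on $\T$ for all $j$, and uniformity is a closed condition under weak convergence.

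The main obstacle is bookkeeping rather than depth: one must be careful that Lemma~\ref{lm:CloseInDistribution} is applied legitimately at each stage of the telescope, which requires knowing in advance that every intermediate process $\XX^{(j)}$ is stationary and translation invariant (the hypotheses of that lemma). This is exactly what the inductive use of Lemmas~\ref{lm:UniformPreservation} and \ref{lm:translation-invariance} provides, so the induction on $j$ should be set up explicitly before the telescoping estimate is written. A secondary point of care is the precise window length: to control a set in $\BB(\T^{[-m:m]})$ one needs the distributional comparison for the segment $\XX[-m:m]$, which has $2m+1$ coordinates; applying Lemma~\ref{lm:CloseInDistribution} (stated for segments $\XX[0:m']$) together with stationarity then costs $2m/N_j$ per step, and it is this factor, weighted by $2^{-m}$ and summed, that must be shown finite. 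Everything else is the routine invocation of completeness of $(\M,\rho)$ and the standard fact that $\rho$-convergence is convergence of finite-dimensional distributions.
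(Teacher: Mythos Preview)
Your proposal is correct and follows essentially the same route as the paper: bound the $\rho$-distance between successive (or more generally, between $\mu_k$ and $\mu_l$) measures via Lemma~\ref{lm:CloseInDistribution}, sum the $2^{-m}\cdot 2m$ weights to get $\rho(\mu_j,\mu_{j+1})\le 4/N_{j+1}$, invoke completeness of $(\M,\rho)$, and then let stationarity, translation invariance, and uniform marginals pass to the limit from Lemmas~\ref{lm:PassageFromPeriodicity}, \ref{lm:UniformPreservation}, and \ref{lm:translation-invariance}. The only cosmetic difference is that the paper bounds consecutive distances and uses the triangle inequality implicitly, whereas you telescope directly; note also that stationarity of each $\XX^{(j)}$ is supplied by Lemma~\ref{lm:PassageFromPeriodicity} (built into the $T_{\epsilon,N}$ construction), not by Lemmas~\ref{lm:UniformPreservation} or \ref{lm:translation-invariance}.
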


\begin{proof}
By the closeness inequality \eqref{eq:ClosenessIneq} and the definition
\eqref{eq:DefMuJ} of $\mu_j$, we have for all $m=1,2, ...$ that
\begin{equation*}\label{eq:ClosenssIneqSym}
\sup \{ |\mu_j(\A) -\mu_{j+1}(\A)|: \, \A \in \BB(\T^{[-m:m]}) \}\leq \frac{ 2 m }{N_{j+1}}.
\end{equation*}
The definition \eqref{eq:DefRho} of the metric $\rho$ and a simple summation then give us
$$
\rho(\mu_j, \mu_{j+1}) \leq \frac{4}{N_{j+1}},
$$
so, by the completeness of  the metric space $(\M, \rho)$,
the sequence of processes $\XX^{(1)}, \XX^{(2)}, \XX^{(3)},  \ldots$ converges in distribution to
a process $\XX^*$.
By Lemmas \ref{lm:PassageFromPeriodicity}, \ref{lm:UniformPreservation}, and \ref{lm:translation-invariance}, we know that
each of the processes  $\XX^{(j)}$ is stationary and translation invariant. Moreover, each of these processes
has uniform marginal distributions.
The process $\XX^*$ inherits all of these properties through convergence in distribution.
\end{proof}

\section{Path Lengths for the Limit Process}\label{se:Pathlengths}

The next lemma expresses a kind of Lipschitz property for the shortest path functional.
Specifically, it bounds the absolute difference in the expected value of
$L(Z)$ and $L(\widetilde Z)$, where $Z$ and $\widetilde Z$ are arbitrary $n$-dimensional random vectors with
values in $\T^n$ and where for $(z_1,z_2, \ldots, z_n) \in \T^n$, we write
$L(z_1,z_2, \ldots, z_n)$ for the length of the shortest path through the points $z_1,z_2, \ldots, z_n$.

The lemma is stated and proved for general $Z$ and $\widetilde Z$, but our typical choice will be
$Z = \XX[0: n-1]$ and $\widetilde Z = \widetilde \XX[0:n-1]$.
We also recall that if $\BB(\T^n)$ denotes the set of all Borel subsets of $\T^n$,
then the \emph{total variation distance} between $Z$ and $\widetilde Z$
is given by
$$
d_{\rm TV}(Z, \widetilde  Z)=\sup \{ |\P(Z \in \A) - \P(\widetilde  Z \in \A) \,|: \A \in \BB(\T^n)\}.
$$
We also recall that the function
$(z_1, z_2, \ldots, z_{n}) \mapsto n^{-1/2} L(z_1, z_2, \ldots, z_{n})$ is uniformly bounded; in fact,
by early work of \citeasnoun{Few:MATH1955}, it is bounded by $3$.

\begin{lemma}\label{lm:BHHRobustness}
For all random vectors $Z$ and $\widetilde  Z$ with values in $\T^{n}$ we have
\begin{equation*}
|\E [L(Z)] - \E[L(\widetilde  Z)]| \leq 3 n^{1/2} d_{\rm TV}(Z, \widetilde  Z).
\end{equation*}
\end{lemma}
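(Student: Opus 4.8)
The plan is to bound the difference $|\E[L(Z)] - \E[L(\widetilde Z)]|$ by writing it as an integral of $L$ against the signed measure $\mu_Z - \mu_{\widetilde Z}$ on $\T^n$, and then to exploit the fact that $L$, after rescaling by $n^{-1/2}$, is a bounded function. Since we are told (via the result of Few) that $n^{-1/2} L(z_1,\dots,z_n) \le 3$ for every point configuration in $\T^n$, the function $g = n^{-1/2} L$ satisfies $0 \le g \le 3$ everywhere on $\T^n$, and in particular $\|g\|_\infty \le 3$.

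First I would recall the standard dual characterization of total variation: for any measurable function $g$ on $\T^n$ with $\|g\|_\infty \le K$, and any two probability measures $\mu, \nu$ on $\T^n$, one has
\begin{equation*}
\left| \int g \, d\mu - \int g \, d\nu \right| \le 2 K \, d_{\rm TV}(\mu,\nu),
\end{equation*}
where $d_{\rm TV}$ is defined as the sup over Borel sets (as in the excerpt) — note this differs by a factor of $2$ from the $L^1$ normalization, so with the excerpt's convention the cleaner bound is $\left| \int g\, d\mu - \int g\, d\nu\right| \le 2K\, d_{\rm TV}$. To keep the constant matching the statement's $3 n^{1/2}$, I would instead use the sharper one-sided estimate: since $g \ge 0$ with $g \le 3$, decompose $\T^n$ according to the sign of $\mu_Z - \mu_{\widetilde Z}$ via a Hahn decomposition $\T^n = P \cup P^c$, so that $\int g\, d(\mu_Z - \mu_{\widetilde Z}) \le \int_P g\, d(\mu_Z - \mu_{\widetilde Z}) \le 3 (\mu_Z - \mu_{\widetilde Z})(P) \le 3\, d_{\rm TV}(Z,\widetilde Z)$, and symmetrically for the reverse inequality using $P^c$. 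This gives $|\E[g(Z)] - \E[g(\widetilde Z)]| \le 3\, d_{\rm TV}(Z,\widetilde Z)$.

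Then I would simply undo the rescaling: multiplying through by $n^{1/2}$ and using $L = n^{1/2} g$ and linearity of expectation yields exactly
\begin{equation*}
\bigl| \E[L(Z)] - \E[L(\widetilde Z)] \bigr| \le 3 n^{1/2} d_{\rm TV}(Z, \widetilde Z),
\end{equation*}
which is the claim. A minor technical point worth a sentence is measurability of $L$ as a function on $\T^n$: it is a minimum over finitely many permutations of a continuous function of the coordinates, hence continuous, so the integrals above are well defined.

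**The main obstacle** is not really an obstacle so much as a bookkeeping choice: getting the constant to come out as exactly $3$ rather than $6$ requires using the positivity of $L$ together with the Hahn decomposition, rather than the crude bound $\|g - c\|_\infty$ for a recentering constant $c$; the naive "$2K\, d_{\rm TV}$" route would give $6 n^{1/2} d_{\rm TV}$. The only substantive input is Few's uniform bound $n^{-1/2}L \le 3$, which the excerpt supplies, so the proof is short.
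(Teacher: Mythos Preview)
Your argument is correct. The key observation --- that $g = n^{-1/2}L$ satisfies $0 \le g \le 3$ and that for any nonnegative bounded $g$ one has $\bigl|\int g\,d\mu - \int g\,d\nu\bigr| \le \|g\|_\infty\, d_{\rm TV}(\mu,\nu)$ via the Hahn decomposition --- is exactly right, and the positivity of $g$ is what buys you the constant $3$ rather than $6$.

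The paper takes a different but closely related route: it invokes the maximal coupling theorem to produce a pair $(Z',\widetilde Z')$ on a common probability space with $Z'\stackrel{d}{=}Z$, $\widetilde Z'\stackrel{d}{=}\widetilde Z$, and $\P(Z'\neq\widetilde Z')=d_{\rm TV}(Z,\widetilde Z)$, then bounds $|\E[L(Z')]-\E[L(\widetilde Z')]|$ by $L^*_n\,\P(Z'\neq\widetilde Z')$ with $L^*_n\le 3n^{1/2}$ from Few. The two arguments are essentially dual to one another --- the maximal coupling is built from the Hahn decomposition of $\mu_Z-\mu_{\widetilde Z}$ --- so neither is deeper than the other. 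Your version has the mild advantage of being self-contained (no external coupling lemma), while the paper's version is perhaps more transparently probabilistic and makes the role of the worst-case path length $L^*_n$ explicit.
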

\begin{proof}
By the maximal coupling theorem \cite[Theorem 5.2]{Lin:DOVER2002} there exists
a probability space and a random pair $(Z', \widetilde Z')$ such that
$Z' \stackrel{d}{=}Z$, $\widetilde Z' \stackrel{d}{=}\widetilde  Z$ and
$$
\P(Z'\not=\widetilde Z')= d_{\rm TV}(Z, \widetilde  Z).
$$
Now, if we set
$
L^*_n = \max\{ L(z_1, z_2, \ldots, z_n): z_t \in \T \text{ and }  t \in [1:n] \},
$
then we have
\begin{align*}
|\E[L(Z)] - \E[L(\widetilde  Z)]|&=| \E[L(Z')] - \E[L(\widetilde Z')]| \\
& \leq \P(Z'\not=\widetilde Z')L^*_n  \\
& \leq  3 n^{1/2} d_{\rm TV}(Z, \widetilde  Z),
\end{align*}
where in the last line we used the classic bound $L^*_n \leq 3 n^{1/2}$ from \citeasnoun{Few:MATH1955}.
\end{proof}

The immediate benefit of Lemma \ref{lm:BHHRobustness} is that it gives us a way to estimate the cost of
a minimal path through the points $X^*_0, X^*_1, \ldots, X^*_{n-1}$.

\begin{lemma}[Shortest Path Differences in the Limit]\label{lm:TSPdifferences}
For all $0 \leq j < \infty $ and all $n \geq 1$ we have
\begin{equation}\label{eq:TSPofXstar}
|\E [ L(\XX^{(j)}[0:n-1]) ] - \E [ L(\XX^{*}[0:n-1]) ] | \leq 3 \,  n^{3/2} \sum_{k=1}^\infty \frac{1}{N_{j+k}}
\end{equation}
\end{lemma}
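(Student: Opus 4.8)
The plan is to combine the Lipschitz-type estimate of Lemma~\ref{lm:BHHRobustness} with the explicit closeness-in-distribution bound \eqref{eq:ClosenessIneq} for a single $T_{\epsilon,N}$ transformation, summed telescopically along the chain \eqref{eq:GrandRecursion}. First I would fix $j$ and $n$ and write the target quantity as a telescoping sum over the intermediate processes,
\begin{equation*}
\E [ L(\XX^{(j)}[0:n-1]) ] - \E [ L(\XX^{*}[0:n-1]) ]
= \sum_{k=0}^{\infty} \bigl( \E [ L(\XX^{(j+k)}[0:n-1]) ] - \E [ L(\XX^{(j+k+1)}[0:n-1]) ] \bigr),
\end{equation*}
which is legitimate because $\XX^{(j+k)} \to \XX^*$ in distribution (Lemma~\ref{lm:GeneralConvergence}) and the functional $z \mapsto n^{-1/2}L(z)$ is bounded and continuous on $\T^n$, so $\E[L(\XX^{(j+k)}[0:n-1])] \to \E[L(\XX^*[0:n-1])]$; hence the tail sums vanish and the series converges to the stated difference.

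Next I would bound each summand. Since $\XX^{(j+k+1)} = T_{\epsilon_{j+k+1}, N_{j+k+1}}(\XX^{(j+k)})$, the closeness inequality \eqref{eq:ClosenessIneq} applied with $m = n-1$ gives
\begin{equation*}
d_{\rm TV}\bigl( \XX^{(j+k)}[0:n-1], \, \XX^{(j+k+1)}[0:n-1] \bigr) \leq \frac{n-1}{N_{j+k+1}} \leq \frac{n}{N_{j+k+1}},
\end{equation*}
where I should first check that the hypotheses of Lemma~\ref{lm:CloseInDistribution} (translation invariance and stationarity of $\XX^{(j+k)}$) are met --- these follow from Lemmas~\ref{lm:PassageFromPeriodicity}, \ref{lm:UniformPreservation}, and \ref{lm:translation-invariance} applied inductively starting from the i.i.d.\ process $\XX^{(0)}$. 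Then Lemma~\ref{lm:BHHRobustness} with $Z = \XX^{(j+k)}[0:n-1]$ and $\widetilde Z = \XX^{(j+k+1)}[0:n-1]$ yields
\begin{equation*}
\bigl| \E [ L(\XX^{(j+k)}[0:n-1]) ] - \E [ L(\XX^{(j+k+1)}[0:n-1]) ] \bigr| \leq 3 n^{1/2} \cdot \frac{n}{N_{j+k+1}} = \frac{3 n^{3/2}}{N_{j+k+1}}.
\end{equation*}
Summing over $k \geq 0$ and reindexing the denominators as $N_{j+k}$ with $k \geq 1$ gives exactly \eqref{eq:TSPofXstar}.

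The only genuine obstacle is justifying the interchange of limit and the telescoping expansion, i.e.\ confirming that $\E[L(\XX^{(j+k)}[0:n-1])]$ actually converges to $\E[L(\XX^*[0:n-1])]$ as $k\to\infty$. This needs convergence in distribution of the finite-dimensional marginals $\XX^{(j+k)}[0:n-1]$ together with the fact that $L(\cdot)$ is a bounded continuous function on $\T^n$ (boundedness is the $L^*_n \leq 3n^{1/2}$ estimate of \citeasnoun{Few:MATH1955} already recalled; continuity is clear from the definition \eqref{TSPcost} as a finite minimum of sums of continuous distance functions). Alternatively, one can bypass the limiting argument entirely by noting that the partial sums of the telescoping series are already within $3n^{3/2}\sum_{i>j+K}N_i^{-1}$ of the difference $\E[L(\XX^{(j)}[0:n-1])] - \E[L(\XX^{(j+K)}[0:n-1])]$, and that this difference converges to the left side of \eqref{eq:TSPofXstar} as $K\to\infty$ by the same continuity-and-boundedness argument; either way the summability $\sum_j N_j^{-1} < \infty$ assumed in Lemma~\ref{lm:GeneralConvergence} makes the bound finite and the estimate rigorous.
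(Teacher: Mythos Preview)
Your argument is correct and follows the same route as the paper: telescope along the chain $\XX^{(j)},\XX^{(j+1)},\ldots$, bound each increment via Lemma~\ref{lm:BHHRobustness} together with the closeness estimate \eqref{eq:ClosenessIneq}, and sum. You are in fact a touch more careful than the paper in explicitly justifying $\E[L(\XX^{(j+k)}[0:n-1])]\to\E[L(\XX^{*}[0:n-1])]$ via boundedness and continuity of $L$ on $\T^n$, which the paper leaves implicit when it passes directly to the infinite triangle inequality.
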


\begin{proof}
Using the shorthand $\XX^{(j+k)}_n = \XX^{(j+k)}[0:n-1]$ for all $k \in \{0,1,2,\ldots\}$, one has by
the triangle inequality that
\begin{equation}\label{eq:TSP-difference-triangle-ineq}
|\E [ L(\XX^{(j)}_n) ] - \E [ L(\XX^{*}_n) ] |
\leq \sum_{k=1}^\infty |\E [ L(\XX^{(j+k)}_n) ] - \E [ L(\XX^{(j+k-1)}_n) ] |.
\end{equation}
Lemma \ref{lm:BHHRobustness} then tells us that
\begin{equation}\label{eq:TSP-difference-BHH-robust}
|\E [ L(\XX^{(j+k)}_n) ] - \E [ L(\XX^{(j+k-1)}_n) ] |
\leq 3 \, n^{1/2} d_{\rm TV}(\XX^{(j+k)}_n, \XX^{(j+k-1)}_n),
\end{equation}
and Lemma \ref{lm:CloseInDistribution} implies that
\begin{equation}\label{eq:TSP-difference-BHH-close}
d_{\rm TV}(\XX^{(j+k)}_n, \XX^{(j+k-1)}_n) \leq  \frac{n}{N_{j+k}},
\end{equation}
so using \eqref{eq:TSP-difference-BHH-robust}
and \eqref{eq:TSP-difference-BHH-close} in the sum \eqref{eq:TSP-difference-triangle-ineq}
completes the proof of the lemma.
\end{proof}

\section{Parameter Choices}

To pass from the general iterative construction \eqref{eq:GrandRecursion}
to the process required by Theorem \ref{th:noBHHThmExpectations},
we need to make parameter choices that go beyond those required by Lemma \ref{lm:GeneralConvergence}
on a sufficient condition for convergence.

First we fix a sequence $\eta_1, \eta_2, \ldots$  of values in $(0,1)$ that decrease
monotonically to zero as $j \rightarrow \infty$; these values just serve to provide us with a measure
of smallness of scale. We then inductively define the values $N_j$ and $\epsilon_j$ through which we finally
define $\XX^*$ by the sequence  \eqref{eq:GrandRecursion} of transformations
$T_{\epsilon_j, N_j}$, $j=1,2, \ldots$. To begin the construction, we can take any $\epsilon_1 \in (0,1)$ and any integer $N_1\geq 2$.
Subsequent values with $j \geq 2$ are determined by two rules:

\begin{enumerate}[align=left,labelindent=!,labelwidth=!,labelsep=0pt]
    \item[\eqnum\label{eq:rule1}] \textsc{Rule 1.}
            We choose an integer $N_{j}$ such that $N_j > j^2 N_{j-1}$ and such that
            \begin{equation*}
                    |\, \E[L(\XX^{(j-1)}[0:n-1])] - \beta n^{1/2} \,| \leq \eta_j n^{1/2}
                    \quad \quad \text{for all } n \geq \lfloor j^{-1} N_{j} \rfloor.
            \end{equation*}

    \item[\eqnum\label{eq:rule2}] {\sc Rule 2.}
        We choose an $\epsilon_j \in (0, \epsilon_{j-1})$ such that
        \begin{equation*}
        \epsilon_j j^{1/2} N_j^{1/2} \leq \eta_j.
        \end{equation*}
\end{enumerate}

%
%

The first rule leans on the fact that each of the processes $\XX^{(j)}$, $ 1 \leq j < \infty,$
is locally uniform so, from Proposition \ref{pr:BHH4LocalUnif}, we have that
$$
n^{-1/2} \, \E[L(\XX^{(j-1)}[0:n-1])]  \rightarrow \beta \quad \quad \text{as } n \rightarrow \infty.
$$
This guarantees the existence of the integer $N_j$ required by \eqref{eq:rule1},
and, once $N_j$ is determined, it is trivial to choose $\epsilon_j$ to satisfy the second rule.

\section{Estimation of Expected Path Lengths: Proof of Theorem \ref{th:noBHHThmExpectations}}\label{se:ProofofNoBHHnew}

The $j$'th stage of the construction \eqref{eq:GrandRecursion} takes a doubly infinite sequence
$\XX^{(j-1)}$ to another doubly infinite sequence $\XX^{(j)}$ by a
$T_{\epsilon_j, N_j}$ transformation that we define in two steps.
Specifically, if $I$ is a random variable with the uniform distribution
on $\{ 0, 1, \ldots, 2N_j-1\}$ that is independent of $\XX^{(j-1)}$, then we have
$$
\widehat{\XX}^{(j-1)} =
H_{\epsilon_j, N_j}(\XX^{(j-1)})
\quad \text{and} \quad
X^{(j)}_t = \widehat X^{(j-1)}_{t+I}\quad \text{for all } t \in \Z.
$$
We now claim that we have the bounds
\begin{align}\label{eq:pathEst1}
L( \XX^{(j)}[0:2jN_j-1] ) &\leq L ( \widehat{\XX}^{(j-1)}[0:2(j+1)N_j-1] )\\
& \leq L ( \XX^{(j-1)}[0:(j+1)N_j-1] ) + 2 \epsilon_j (j+1) N_j.  \notag
\end{align}
To check the first inequality, we recall that $0 \leq I < 2N_j-1$ so the set
$$
\WW=\{ \, \widehat{{X}}^{(j-1)}_t :  0 \leq t \leq 2(j+1)N_j-1 \, \}
$$
is a superset of
$
\{ \widehat{{X}}^{(j-1)}_t:  I \leq t  \leq I + 2 j N_j-1 \}=\{ X^{(j)}_t :  0 \leq t \leq 2 j N_j-1 \} .
$

To check the second inequality of \eqref{eq:pathEst1} takes more work.
We first note that the segment $ \widehat{\XX}^{(j-1)}[0:2(j+1)N_j-1]$
contains only complete blocks, so one also has
$$
\WW = \{ X^{(j-1)}_t :  0 \leq t \leq (j+1)N_j -1 \} \cup \{ X^{(j-1)}_t(\epsilon) :  0 \leq t \leq (j+1)N_j -1 \},
$$
and, to prove the second inequality of \eqref{eq:pathEst1},
we construct a suboptimal path through the points of $\WW$.

To build this path, we just take
an optimal path $\PP$ through the set of points
$ \{ X^{(j-1)}_t :  0 \leq t \leq (j+1)N_j -1 \}$
and then for each $t \in [0:(j+1)N_j-1]$ we adjoin to $\PP$ the loop that goes from
${X}^{(j-1)}_t$ to its shifted twin  $X^{(j-1)}_t (\epsilon_j)$  and back to ${X}^{(j-1)}_t$.
The suboptimal path has  $(j+1)N_j$ loops and each of these has length $2 \epsilon_j$, so
the suboptimal path through $\WW$ has a cost that is
bounded by the last sum in \eqref{eq:pathEst1}.

Now, when we take expectations in \eqref{eq:pathEst1}, the rules \eqref{eq:rule1} and \eqref{eq:rule2} give us
\begin{align}
(2jN_j)^{-1/2}\E [ L(\XX^{(j)}[0 \! : \! 2jN_j \! - \! 1] ) ] &\leq \! (\beta \! + \! \eta_j)
(1 \! + \! j^{-1/2}) 2^{-1/2} \! + \! 2 \epsilon_j (j^{1/2} \! + \! j^{-1/2})  N_j^{1/2} \notag \\
&\leq \! (\beta \! +  \! \eta_j)(1 \! + \! j^{-1/2}) 2^{-1/2} \! + \! 4 \eta_j. \notag
\end{align}
This bound together with \eqref{eq:TSPofXstar} for  $n=2jN_j$ then gives us
\begin{align}
(2jN_j)^{-1/2}\E [L (\XX^{*}[0:2jN_j-1])]
& \leq  \! (\beta \! + \! \eta_j)
(1 \! + \! j^{-1/2})  2^{-1/2} \! +  \! 4 \eta_j  \! + \!  6 jN_j \sum_{k=1}^\infty \frac{1}{N_{j+k}}\notag \\
&\leq  \! (\beta \! + \! \eta_j)(1 \! + \! j^{-1/2})  2^{-1/2} \! +  \! 4 \eta_j  \! + \!  6/j, \nonumber
\end{align}
where, in the second inequality, we estimate the sum using the
strict inequality $j^{2k} N_j < N_{j+k}$ which holds for all $k\geq 1$ and $j \geq 2$ by
the first part of our first parameter formation rule \eqref{eq:rule1}.
This last displayed bound is more than one needs to complete the proof of the first inequality \eqref{noBBBinequaliesMean}
of Theorem \ref{th:noBHHThmExpectations}.

The second inequality \eqref{noBBBinequaliesMean} of Theorem \ref{th:noBHHThmExpectations} is easier. If we take
$n= \lfloor j^{-1} N_j \rfloor$ in the estimate \eqref{eq:rule1} given by our first parameter rule, then we have
$$
\beta -\eta_j \leq \lfloor j^{-1} N_j \rfloor^{-1/2} \E[L(\XX^{(j-1)}[0:\lfloor j^{-1} N_j \rfloor-1])].
$$
Now, if we use the bound \eqref{eq:TSPofXstar} for $n= \lfloor j^{-1} N_j \rfloor$ and if we estimate
the infinite sum exactly as we did before, then we have
$$
\beta -\eta_j  - 6/j \leq \lfloor j^{-1} N_j \rfloor^{-1/2} \E[L(\XX^{*}[0:\lfloor j^{-1} N_j \rfloor-1])],
$$
and this bound more than one needs to complete the proof of second inequality \eqref{noBBBinequaliesMean} of Theorem \ref{th:noBHHThmExpectations}.

\section{Theorem \ref{th:noBHHThmExpectations} Implies Theorem \ref{th:noBHHThm}}\label{se:ErgodicDecompTrick}

We now show that Theorem \ref{th:noBHHThm} follows from Theorem \ref{th:noBHHThmExpectations} and Choquet's representation theorem.
To set this up, we first consider the
set $\U \subset \M$ of Borel probability measures on $\T^\infty$ that are shift invariant and that have uniform marginals.
Here the shift transformation $\Theta: \T^\infty \rightarrow \T^\infty$ is defined by setting
$\Theta(\x)_t=x_{t+1}$ where  $\x=(\ldots, x_{-1}, x_0, x_1, \ldots) \in \T^\infty$, and a Borel measure $\nu$ on $\T^\infty$
is said to be shift invariant if
$\nu(\Theta^{-1}(\A))=\nu(\A)$ for each Borel set $\A \subseteq \T^\infty$.
Finally, to say that a measure $\nu$ on $\T^\infty$ has uniform marginals just means that for each Borel set
$ A \subseteq \T$ and for each fixed $t \in \Z$ we have
$\nu(\A_t) = \lambda(A)$ when we set $\A_t=\{\x: x_t  \in A  \}$
and where $\lambda(A)$ denotes the Lebesgue measure of the set $A \subseteq \T$.

Under the product topology the space  $\T^\infty$ is compact, and, under the topology of weak convergence,
the space $\M$ of probability measures on $\T^\infty$ is metrizable, locally convex, and compact.
The set $\U \subset \M$ of shift invariant measures with uniform marginals is trivially convex,
and $\U$ is a closed subset of $\M$. Hence, under the topology of weak convergence, the set
$\U$ is also compact.

We now consider the process $\XX^*$ given by Theorem \ref{th:noBHHThmExpectations}, and we define a
measure $\mu$ on $\T^\infty$ by setting
\begin{equation}\label{eq:MuDef}
\mu(\A)= P(\XX^* \in \A) \quad \text{for each Borel set } \A \subseteq \T^\infty.
\end{equation}
Stationarity of  $\XX^*$ tells us that
$\mu$ is invariant under the shift transformation $\Theta$, and, since $\mu$ inherits the uniform marginal property from $\XX^*$,
we have $\mu \in \U$.

Next, we denote the symmetric difference between the sets $\A$ and $\A'$ by $\A \Delta \A'$, and we recall that
a probability measure $\nu$ on $\T^\infty$ is \emph{ergodic} if for any Borel $\A \subseteq \T^\infty$, one has
$$
\nu(\A  \Delta  \Theta^{-1}(\A)) =0 \quad \text{if and only if}\quad \nu(\A)=0 \, \, \text{or } \nu(\A)=1.
$$
Moreover, if we let $\U_e$ denote the set of
of extreme points of the convex set $\U$, then it is a very pleasing fact that $\U$ is exactly equal to the set of ergodic measures in $\U$,
\citeaffixed[Example 8.17, pp.~129--131.]{Simon2011}{cf.}

Choquet's representation theorem
\citeaffixed[Theorem 10.7]{Simon2011}{cf.} now tells us that there is a
probability measure $D_\mu$ with support on $\U_e \subset \U$ such that
\begin{equation}\label{eq:ChoquetRepr}
\mu(\A) = \int_{\U_e} \nu(\A) \, D_\mu (d \nu) \quad\quad \text{ for each Borel } \A \subseteq \T^\infty.
\end{equation}
In other words, any shift invariant probability measure on $\T^\infty$ with uniform marginal distributions is an average
of ergodic, shift invariant probability measures on $\T^\infty$ that have uniform marginal distributions.
This representation will lead us quickly to Theorem \ref{th:noBHHThm}.

For an $\x=(\ldots, x_{-1}, x_0, x_1, \ldots) \in \T^\infty$
we write the $[1:n]$-segment of $\x$
with the shorthand $\x_n = (x_1,x_2, \ldots, x_n)$, and, for constants $c_1<c_2$,
we consider the set
\begin{equation*}\label{DefDelta}
\Gamma[c_1, c_2]
= \big\{  \x \in \T^\infty \! : \liminf_{n \rightarrow \infty}  n^{-1/2} L(\x_n)  \leq  c_1  \text{ and }
                              c_2 \leq  \limsup_{n \rightarrow \infty}  n^{-1/2}L(\x_n) \big\}.
\end{equation*}
If $\mu(\Gamma[c_1,c_2])=0$ for all $c_1< c_2$, then  $n^{-1/2} L(\x_n)$ converges with
$\mu$-probability one. Since this ratio is bounded, the dominated convergence theorem implies the convergence of the expectations
$ n^{-1/2} \E_\mu [ L(\x_n) ]=n^{-1/2} \E[ L(\XX^{*}[1:n]) ]$, where in the last equality we just used the definition \eqref{eq:MuDef} of $\mu$.
By Theorem \ref{th:noBHHThmExpectations} we know that we do not have the convergence of these expectations, so there must be some
pair $c_1< c_2$ for which we have  $0 <\mu(\Gamma[c_1,c_2])$. Finally, by the representation \eqref{eq:ChoquetRepr} we have
$$
0<\mu(\Gamma[c_1,c_2]) = \int_{\U_e} \nu(\Gamma[c_1,c_2]) \, D_\mu (d\nu),
$$
so there is some ergodic measure $\nu \in \U_e$ for which we have $0< \nu(\Gamma[c_1,c_2])$.

The sets $\Gamma[c_1,c_2]$ and $\Theta^{-1}(\Gamma[c_1,c_2])$
are identical, so $\Gamma[c_1,c_2]$ is an invariant set for $\nu$, or any other measure. Since $0< \nu(\Gamma[c_1,c_2])$, the
ergodicity of $\nu \in \U_e$ gives us $\nu(\Gamma[c_1,c_2])=1$.
Finally, if we take $\XX$ to be the stationary process determined by the shift transformation $\Theta$ and the ergodic measure $\nu$, then
we have
\begin{equation}\label{eq:XXDef}
P(\XX \in \A)=\nu(\A) \quad \text{for each Borel set } \A \subseteq \T^\infty.
\end{equation}
By construction, the process $\XX$ is stationary and ergodic, and $\XX$ also inherits from $\nu$
the property of uniform marginal distributions. Finally, by \eqref{eq:XXDef} and the
definition of $\Gamma[c_1,c_2]$, we see that $\XX$ has all of the features required by Theorem \ref{th:noBHHThm}.

\begin{remark}\harvardparenthesis{none}
Here, instead of using Choquet's theorem, one could consider using the
ergodic decomposition theorems of Krylov and Bogolioubov
(cf. \citeasnoun{Dyn:AP1978} or \citeasnoun[Theorem 10.26, p.~196]{Kal:SPRI2002}),
but, since the usual statements of these theorems
to not immediately accommodate the restriction to measures with uniform marginal distributions, it
is simpler to work directly with Choquet's theorem where no modifications are required. \harvardparenthesis{round}

For a full treatment of Choquet's theorem one can consult \citeasnoun{Phelps2001} or \citeasnoun{Simon2011},
but for the existence theorem in the metrizable case one can appeal more easily to the short proof of \citeasnoun{Bonsall1963}
which uses little more than the Hahn-Banach theorem and the Riesz representation theorem.
\end{remark}

\section{Extensions, Refinements, and Problems}\label{se:conclusions}

There are easily proved analogs of
Theorems \ref{th:noBHHThm} and \ref{th:noBHHThmExpectations} for many of the functionals of combinatorial optimization for which
one has the analog of the Beardwood-Halton-Hammersley theorem.
In particular, one can show that the analogs of
Theorems \ref{th:noBHHThm} and \ref{th:noBHHThmExpectations} hold for the
minimal spanning tree (MST) problem studied in \citeasnoun{Ste:AP1988}
and for the minimal matching problem studied in \citeasnoun{Rhee:AAP1993}.
In these cases, the construction of the processes in Theorems \ref{th:noBHHThm} and \ref{th:noBHHThmExpectations} needs almost no alteration.
The main issue is that one needs to establish a proper analog of Proposition \ref{pr:BHH4LocalUnif}, but this is often easy. Once an analog of
Proposition \ref{pr:BHH4LocalUnif} is in hand,
one only needs to make
few cosmetic changes to the arguments of Section \ref{se:ProofofNoBHHnew}.

Still, there are interesting functionals for which it is not as clear how one can adapt the proofs of
Theorems \ref{th:noBHHThm} and \ref{th:noBHHThmExpectations}.
One engaging example is the sum of the edge lengths of the Voronoi tessellation. In this case, the analog of the
BHH theorem was developed by \citeasnoun{Mil:MB1970} for Poisson sample sizes, and later by
\citeasnoun{McGivYuk:SPA1999} for fixed sample sizes (and with complete convergence).
A second, much different example, is the
length of the path that one obtains by running the Karp-Held algorithm for the TSP.
The expressly algorithmic nature of this functional introduces several new twists, but, nevertheless,
\citeasnoun{GoeBer:MOR1991}
obtained the analog of the BHH theorem.

These two functionals are ``less local'' than the TSP, MST, or minimal matching functionals; in particular, they are not as
amenable to suboptimal patching bounds such as those we used in Section \ref{se:ProofofNoBHHnew}.
Nevertheless, these functionals are sufficiently local to allow for analogs of the BHH theorem, so it seems
probable that the natural analogs of Theorems \ref{th:noBHHThm} and \ref{th:noBHHThmExpectations} would hold as well.

There are two further points worth noting. First, at the cost of using more
complicated versions of the  transformations $H_{\epsilon, N}$ and $T_{\epsilon, N}$, one can replace the
infimum bound $2^{-1/2}\beta$ of Theorem \ref{th:noBHHThmExpectations} with a smaller constant.
Since the method of Section \ref{se:ErgodicDecompTrick} shows that any
infimum bound less that $\beta$ suffices to prove Theorem \ref{th:noBHHThm},
we did not pursue the issue of a minimal infimum bound.

Finally, it is feasible that the process  $(X^*_t: t \in \Z )$ that was constructed for the proof of
Theorem \ref{th:noBHHThmExpectations} is itself ergodic ---  or even mixing.  If this could be established, then
one would not need the Choquet's integral representation argument of Section \ref{se:ErgodicDecompTrick}.
Unfortunately, it does not seem easy to prove that the process $(X^*_t: t \in \Z )$ is ergodic, even though this may be somewhat intuitive.

\section*{Acknowledgments}

The authors are grateful to the editors and the anonymous readers for their careful comments that went beyond
the normal. Their generous help has been of major benefit.

\end{document}